\DeclareSymbolFont{AMSb}{U}{msb}{m}{n}
\DeclareMathSymbol{\N}{\mathbin}{AMSb}{"4E}
\DeclareMathSymbol{\Z}{\mathbin}{AMSb}{"5A}
\DeclareMathSymbol{\R}{\mathbin}{AMSb}{"52}
\DeclareMathSymbol{\Q}{\mathbin}{AMSb}{"51}
\DeclareMathSymbol{\I}{\mathbin}{AMSb}{"49}
\DeclareMathSymbol{\C}{\mathbin}{AMSb}{"43}
\DeclareMathOperator{\cchar}{char}
\DeclareFontFamily{U}{mathx}{\hyphenchar\font45}
\DeclareFontShape{U}{mathx}{m}{n}{
      <5> <6> <7> <8> <9> <10>
      <10.95> <12> <14.4> <17.28> <20.74> <24.88>
      mathx10
      }{}
\DeclareSymbolFont{mathx}{U}{mathx}{m}{n}
\DeclareMathAccent{\widecheck}{0}{mathx}{"71}
\DeclareMathAccent{\wideparen}{0}{mathx}{"75}
\newcommand{\dbl}{[\hspace{-0.2ex}[}
\newcommand{\dbr}{]\hspace{-0.2ex}]}
\newcommand{\db}[1]{\dbl {#1} \dbr}
\newcommand{\ctens}{\widehat{\otimes}}
\newcommand{\iso}{\cong}
\newcommand{\invlim}{\underleftarrow{\textnormal{lim}}\,}
\newcommand{\dirlim}{\underrightarrow{\textnormal{lim}}\,}
\newcommand{\onto}{\twoheadrightarrow}
\newcommand{\ct}{\widehat\otimes}
\newcommand{\Ker}{\textnormal{Ker}}
\newcommand{\tn}[1]{\textnormal{#1}}
\newcommand{\F}{\mathbb{F}}
\numberwithin{equation}{section}
\title{Semisimplicity and separability for pseudocompact algebras}
\author[a]{Kostiantyn Iusenko}
\author[b]{John William MacQuarrie}
\affil[a]{Instituto de Matem\'{a}tica e Estat\'{i}stica, Univ. de S\~{a}o Paulo, S\~{a}o Paulo, SP, Brazil}
\affil[b]{Universidade Federal de Minas Gerais, Belo Horizonte, MG, Brazil}
\date{}
\newtheorem{prop}[equation]{Proposition}
\newtheorem{lemma}[equation]{Lemma}
\newtheorem{theorem}[equation]{Theorem}
\newtheorem{corol}[equation]{Corollary}
\theoremstyle{remark}
\newtheorem{rem}[equation]{Remark}
\theoremstyle{definition}
\newtheorem{defn}[equation]{Def{i}nition}
\newtheorem{ex}[equation]{Example}
\begin{document}

\footnotetext{\textit{Email addresses:} iusenko@ime.usp.br (Kostiantyn Iusenko), john@mat.ufmg.br (John MacQuarrie)}

\maketitle

\begin{abstract}
We give a self-contained introduction to the wonderfully well-behaved class of pseudocompact algebras, focusing on the foundational classes of semisimple and separable algebras.  We give characterizations of such algebras analogous to those for finite dimensional algebras.  We give a self-contained proof of the Wedderburn-Malcev Theorem for pseudocompact algebras.

\

\noindent\textbf{Keywords:} pseudocompact algebra, inverse limit, semisimple algebra, separable algebra.
\end{abstract}

\section{Introduction}

Let $k$ be a field.  A pseudocompact $k$-algebra is the inverse limit of an inverse system of finite dimensional associative unital $k$-algebras, taken in the category of topological $k$-algebras.  Pseudocompact algebras thus have the same role in the world of associative algebras as profinite groups do in the world of groups.  They appear naturally in many contexts: for instance, if one wishes to study the representation theory of the profinite group $G$, one must study the modules for the completed group algebra $k\db{G}$, defined to be the inverse limit of the finite dimensional group algebras $k[G/N]$ where $N$ runs through the open normal subgroups of $G$.  As another example, the category of pseudocompact algebras and continuous algebra homomorphisms is precisely dual to the category of coalgebras and coalgebra homomorphisms -- a category central to the study of Hopf algebras.

Pseudocompact algebras are incredibly well-behaved, with a theory that in many ways resembles that of finite dimensional algebras.  This said, pseudocompact algebras have an unfair reputation for being difficult and technical: a slanderous claim we intend to set right!  Part of the problem seems to stem from the fact that the literature on pseudocompact algebras is widely scattered: you might only find the simple result you want about pseudocompact algebras as a special case of a complicated result stated for some wider and more difficult class of algebras, for example.

Here we intend to give a clear and uncomplicated survey of some foundational results in pseudocompact algebras, assuming little more than a working knowledge of basic results of finite dimensional associative algebras, and some undergraduate topology.  We focus on the fundamental class of semisimple pseudocompact algebras, and the even better behaved subclass of separable pseudocompact algebras.  We will note that, exactly as with finite dimensional algebras, semisimple algebras are a powerful starting place from which to study arbitrary pseudocompact algebras.

The text is organized as follows. In Section \ref{Section Prelims} we collect the definitions and basic properties we will need to study pseudocompact algebras. In Section \ref{section Jacobson radical and ss} we first discuss the topological Jacobson radical of a pseudocompact algebra, giving it a robust characterization in Proposition \ref{radicalChar}.  We then define the notion of semisimpicity for pseudocompact algebras and characterize these algebras in Proposition \ref{ssCharacterization}.  In Section \ref{Section Separable and WM} we consider separable pseudocompact algebras, characterizing them in Theorem \ref{Theorem separable characterization}, before finishing with a self-contained proof of the Wedderburn-Malcev Theorem for pseudocompact algebras (Theorems \ref{theorem Wedderburn splitting} and \ref{Malcev Uniqueness}).

Full disclosure regarding the phrase ``self-contained'': while the main thread and principal results of the discussion are for the most part honest-to-goodness self-contained, we allow ourselves a little more freedom to refer to external literature in two circumstances.  Firstly in examples: frequently examples can be illuminating even without a rigorous understanding of the justifications, and for this reason we include them without going into details, citing external results for proofs and further details.  Secondly, our focus is on semisimple and separable algebras, rather than a survey of pseudocompact algebras in general.  There are concepts (basic properties of the inverse limit functor, existence of a tensor product, etc.) whose proofs do not add to the discussion, and in these cases we again allow ourselves to cite the literature.

\section{Preliminaries}\label{Section Prelims}

Throughout, let $k$ be a field, treated as a discrete topological ring.

\begin{defn}
A \emph{pseudocompact $k$-algebra} is a topological $k$-algebra having basis of $0$ consisting of ideals of finite codimension whose intersection is $0$, and complete with respect to this topology.  Equivalently, a pseudocompact $k$-algebra is an inverse limit of discrete finite dimensional $k$-algebras and algebra homomorphisms.

If $A$ is a pseudocompact algebra, say that an $A$-module $U$ is \emph{discrete} if its topology is discrete and the multiplication $A\times U\to U$ is continuous (where $A\times U$ is given the product topology).  A \emph{pseudocompact $A$-module} is an inverse limit of discrete finite dimensional $A$-modules. The
category of pseudocompact $A$-modules is an abelian category with exact
inverse limits, see, for instance, \cite[Chapter IV, Theorem 3]{Gabriel_thesis}.
\end{defn}

We provide several examples.

\medskip

\begin{ex}  Let $A$ be the algebra of formal power series $k\db{x}$ in the variable $x$.  Every ideal of $k\db{x}$ apart from $0$ is of the form $(x^n)$ for some $n\in \N$, which has codimension $n$ in $A$.  For each $n$, the natural map from $k\db{x}/(x^{n+1}) = k[x]/(x^{n+1})$ to $k\db{x}/(x^n)$ sending $x$ to $x$ is a surjective algebra homomorphism.  We obtain by composing these maps an inverse system of finite dimensional algebras as follows:
$$\hdots \to k\db{x}/(x^3) \to k\db{x}/(x^2) \to k\db{x}/(x).$$
The inverse limit of this inverse system is $k\db{x}$, and so $k\db{x}$ is pseudocompact.  More generally, the ring of power series in any number of variables (finite or infinite) is a pseudocompact algebra.
\end{ex}

\medskip

\begin{ex} A \emph{profinite group} is a compact, Hausdorff, totally disconnected topological group.  In other words, a profinite group is an inverse limit in the category of topological groups of an inverse system of discrete finite groups.  Profinite groups are most frequently encountered in nature as Galois groups of Galois field extensions: If $L$ is a Galois extension of a field $K$ then $L$ is the union (= direct limit) of the finite intermediate Galois extensions of $L/K$.  If $F$ is such an extension, then $\tn{Gal}(F:K)$ is
a finite group.  Furthermore, an inclusion $F\to F'$ induces a surjective group homomorphism $\tn{Gal}(F':K)\to \tn{Gal}(F:K)$ by restricting the domain and codomain of each $\rho:F'\to F'$ to $F$.  Thus, applying $\tn{Gal}(-:K)$, the direct system of field extensions becomes an inverse system of finite groups and we define the Galois group $\tn{Gal}(L:K)$ of $L/K$ to be the inverse limit of this inverse system.  The topology of a profinite group (and by extension, of the algebra we will shortly define) is justified by the generalization of the Fundamental Theorem of Galois Theory to this context: there is a perfect order reversing correspondence between the intermediate extensions of $L/K$ and the \emph{closed} subgroups of the profinite group $\tn{Gal}(L:K)$ (see for instance \cite[Theorem 2.11.3]{RibZal}).  If $G$ is a profinite group and $k$ is a field, then for each continuous finite quotient $G/N$ we may consider the finite dimensional group algebra $k[G/N]$.  The inverse system of finite groups induces an inverse system of finite dimensional algebras in the obvious way, and we define the \emph{completed group algebra} $k\db{G}$ to be the inverse limit of this inverse system.  Thus $k\db{G}$ is a pseudocompact algebra by definition and it is the natural place to study the representation theory of the profinite group $G$ \cite{Brumer, MacQmodreps, MacQgreencorr, MacQSymondsbrauertheory, Symondspermcomplexes, Symondsdoublecoset, MelnikovSubgpsHomology}.
\end{ex}

\medskip

\begin{ex} A \emph{quiver} $Q$ is a directed graph, with multiple arrows and cycles permitted.  From $Q$ we can construct a pseudocompact algebra $k\db{Q}$, the \emph{completed path algebra} of $Q$, in analogy with the well-known construction in finite dimensional algebras.  The details are not difficult but require some further definitions that would lead us away from our main interest: for the construction in the finite case see \cite[Chapter III.1]{ARS} and in the pseudocompact case see \cite[Section 8] {SimsonCoalg}.  As a simple example,  the completed path algebra $k\db{Q}$ of the quiver having one vertex and one loop $x$, is the algebra of formal power series $k\db{x}$ discussed above.  As in the finite dimensional case, completed path algebras are fundamental examples for pseudocompact algebras: if $k$ is algebraically closed and $A$ is a pseudocompact $k$-algebra, then the category of pseudocompact $A$-modules is equivalent to the category of pseudocompact modules for the algebra $k\db{Q}/I$, where $Q$ is a quiver and $I$ is a suitable closed ideal of $k\db{Q}$ (this follows by duality from  \cite[Theorem 4.3]{CMo}).
\end{ex}
 

\medskip

\begin{ex} \label{Ex.products}
For us, an important class of examples of infinite dimensional pseudocompact algebras and modules comes by taking direct products of finite dimensional objects of the same type. Namely, if $\left\{X_{i}, i\in I\right\}$ is a collection of finite dimensional topological $k$-vector spaces indexed by a set $I$ (each possibly with the structure of an algebra or a module), the corresponding direct product is the topological $k$-vector space $X:=\prod_{i \in I} X_{i}$, endowed with the product topology.  Additional structure is applied coordinate-wise, making the direct product into a pseudocompact object of the same type. 
One may check that $X$ can be expressed as the inverse limit of an inverse system of finite direct products as follows.  Given any subset $F$ of $I$, denote by $X_F$ the direct product $\prod_{i \in F} X_{i}$ (so that in particular $X=X_I$).  Let $\mathcal F$ be the set of all finite subsets $F$ of $I$, and consider the inverse system $\{X_F, \varphi_{FG},\mathcal F\}$ where $\varphi_{FG}:X_G \to X_F$, defined whenever $F\subseteq G$, is the obvious projection. Then
$$
   X=\invlim_{F\in \mathcal F} X_F.
$$
While not every pseudocompact object is a product of finite dimensional objects, the product notation can still be utilized for a general pseudocompact object, for the following reason: if $X$ is the inverse limit of the inverse system $\invlim_{i\in I}\{X_i, \varphi_{ij}\}$ of finite dimensional objects, then
$$X \iso \bigg\{(x_i)_{i\in I}\in \prod_{i\in I}X_i\,|\,\varphi_{ij}(x_j) = x_i,\  i\leqslant j\bigg\},$$
a closed subobject of $X_I$.  We do not use this (standard) fact explicitly in this discussion, so we skip the details.
\end{ex}

\medskip

\begin{ex} A \emph{simple} pseudocompact module for the pseudocompact algebra $A$ is a non-zero pseudocompact module not having any closed submodules other than $0$ and itself.  Any infinite dimensional pseudocompact $A$-module has a lot of submodules by definition, so that in particular simple pseudocompact $A$-modules are finite dimensional.  Finite dimensional pseudocompact $A$-modules are necessarily discrete, and hence simple pseudocompact $A$-modules are simple as abstract $A$-modules.

It is \emph{not} the case that an arbitrary discrete finite dimensional $A$-module is necessarily pseudocompact!  This is because not every left ideal of finite codimension is open in $A$.  To exhibit an explicit example is tricky, but one may work indirectly as follows: Let $\F_p$ be the field of $p$ elements, and consider $A = \prod_{n\in \N}\F_p$, a countable product of copies of the 1-dimensional algebra $\F_p$.  The ideal $I = \bigoplus_{n\in \N}\F_p$ is proper in $A$, so define $M$ to be a maximal ideal of $A$ containing $I$.  Note that $I$ is dense in the topology of $A$, and hence so is $M$.  In particular $M$, being proper and dense, cannot be closed in $A$.  To show that $M$ has codimension $1$, consider an element $a = (a_n)_{n\in \N}$ of $A$.  Then $a^p = ({a_n}^p)_{n\in \N} = (a_n)_{n\in \N} = a$.   Thus $A/M$ is a field in which every element is a root of $x^p - x$ and hence $A/M\iso \F_p$.  We mention in passing that replacing $\F_p$ with an arbitrary field $k$, the structure of $A/M$ can be much more complicated!
\end{ex}

\medskip

Note that pseudocompact algebras and modules are in particular pseudocompact $k$-vector spaces.  Pseudocompact objects need not of course be compact, since even a finite dimensional $k$-vector space is not compact when $k$ is infinite.  But a very useful weaker version of compactness holds.  Recall that an \emph{affine subspace} of a vector space $U$ is a coset of a subspace of $U$.

\begin{lemma}\label{Lemma PC is LC}
Let $U$ be a pseudocompact vector space.  Then $U$ is \emph{linearly compact}, meaning that if $\mathcal{X}$ is a collection of closed affine subspaces of $U$ and if $\bigcap_{X\in \mathcal{X}}X = \varnothing$, then there is a finite subcollection $X_1,\hdots,X_n\in \mathcal{X}$ such that $X_1\cap \hdots\cap X_n = \varnothing$.
\end{lemma}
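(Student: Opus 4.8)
The plan is to argue by contraposition. Suppose $\mathcal X$ has the finite intersection property (every finite subcollection has nonempty intersection); I will show $\bigcap_{X\in\mathcal X}X\neq\varnothing$. My first move is to apply Zorn's Lemma to enlarge $\mathcal X$ to a collection $\mathcal M$ of closed affine subspaces that is maximal among those containing $\mathcal X$ and having the finite intersection property; since the union of a chain of such collections again has the finite intersection property, Zorn applies. Two features of $\mathcal M$ will drive everything: first, $\mathcal M$ is closed under finite intersections, because if $X,X'\in\mathcal M$ then $\mathcal M\cup\{X\cap X'\}$ still has the finite intersection property, so maximality forces $X\cap X'\in\mathcal M$; in particular $\mathcal M$ is downward directed. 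Second, it suffices to exhibit a point of $\bigcap_{X\in\mathcal M}X$, as this set is contained in $\bigcap_{X\in\mathcal X}X$.

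Writing $U=\invlim_i U_i$ with each $U_i$ discrete and finite dimensional, transition maps $\varphi_{ij}\colon U_j\to U_i$, and projections $\pi_i\colon U\to U_i$, I would next record two elementary facts. The finite dimensional case of the lemma is immediate: a downward directed family of nonempty affine subspaces of a finite dimensional space has nonempty intersection, since among its members one of least dimension is, by directedness, contained in all the others (an affine subspace contained in one of equal dimension coincides with it) and so equals the intersection. The second fact is a recovery formula: any closed affine subspace $X\subseteq U$ satisfies $X=\bigcap_i\pi_i^{-1}(\pi_i(X))$. Indeed the right-hand side is closed and contains $X$, while conversely if $u$ lies in it then $\pi_i(u)\in\pi_i(X)$ for every $i$, so the basic neighbourhood $\pi_i^{-1}(\pi_i(u))$ of $u$ meets $X$ for each $i$, placing $u$ in the closure of $X$, which is $X$. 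Note each $\pi_i(X)$ is a nonempty affine subspace of $U_i$ because $\pi_i$ is linear.

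The heart of the argument, and where I expect the real work to lie, is to show that for each $i$ the intersection $Y_i:=\bigcap_{X\in\mathcal M}\pi_i(X)$ is a single point. Directedness of $\mathcal M$ makes $\{\pi_i(X):X\in\mathcal M\}$ a downward directed family of nonempty affine subspaces of the finite dimensional $U_i$, so the finite dimensional case gives $Y_i\neq\varnothing$. Maximality then pins $Y_i$ down to a singleton: for any $p\in Y_i$ the fibre $\pi_i^{-1}(p)$ is a closed affine subspace meeting every member of $\mathcal M$ (as $p\in\pi_i(X)$ for all $X$), hence lies in $\mathcal M$; were there two distinct points $p\neq p'$ in $Y_i$, the members $\pi_i^{-1}(p)$ and $\pi_i^{-1}(p')$ of $\mathcal M$ would be disjoint, contradicting the finite intersection property. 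This singleton step is exactly the obstacle: it replaces the naive (and in general false) hope that an inverse limit of the nonempty images would be nonempty merely because the transition maps are surjective, with maximality forcing the images down to points that completeness can then assemble.

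To finish, write $Y_i=\{y_i\}$. Since $\varphi_{ij}(\pi_j(X))=\pi_i(X)$ we get $\varphi_{ij}(Y_j)\subseteq Y_i$, forcing $\varphi_{ij}(y_j)=y_i$, so $(y_i)_i$ is a compatible thread and, by definition of the inverse limit, determines a unique $u\in U$ with $\pi_i(u)=y_i$ for all $i$. For every $X\in\mathcal M$ and every $i$ one has $\pi_i(u)=y_i\in Y_i\subseteq\pi_i(X)$, so $u\in\pi_i^{-1}(\pi_i(X))$ for all $i$, whence $u\in X$ by the recovery formula. Therefore $u\in\bigcap_{X\in\mathcal M}X\subseteq\bigcap_{X\in\mathcal X}X$, contradicting emptiness and proving the lemma.
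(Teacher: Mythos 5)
Your proof is correct, but it is a genuinely different (and much more detailed) route than the paper's: the paper disposes of this lemma in one line, observing that finite dimensional spaces are linearly compact and then citing Zelinsky [Proposition 4] for the fact that this property passes to inverse limits. What you have done is, in effect, unpack that citation into a self-contained argument in the style of the Alexander subbase/ultrafilter proof of Tychonoff's theorem: enlarge the family to a Zorn-maximal one $\mathcal{M}$ with the finite intersection property, use maximality twice (closure under finite intersections, and the fibre trick $\pi_i^{-1}(p)\in\mathcal{M}$) to force each $Y_i=\bigcap_{X\in\mathcal{M}}\pi_i(X)$ to be a single point, and then assemble these points into a compatible thread. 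The maximality step is exactly the right idea, since it sidesteps the false hope that an inverse limit of nonempty affine images is automatically nonempty; and your recovery formula $X=\bigcap_i\pi_i^{-1}(\pi_i(X))$ is proved directly by a closure argument, so there is no circularity with the paper's Lemma \ref{lemmaWilson} (which is deduced \emph{from} the present lemma). Two standard facts are used silently and deserve a word: the index poset of the inverse system is directed (needed both for your neighbourhood-basis claim and for directedness of the image families), and the completeness of $U$ is what identifies $U$ with the space of compatible threads (the paper records this identification in Example \ref{Ex.products}). The trade-off is clear: the paper's citation keeps the exposition short, while your argument makes the lemma honestly self-contained -- arguably more in the spirit of the paper's stated goals -- at the cost of a page of work.
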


\begin{proof}
It is more-or-less obvious that a finite dimensional $k$-vector space is linearly compact, so this result is \cite[Proposition 4]{Zelinsky}.
\end{proof}

Linearly compact algebras and modules are well-studied and there is a huge literature on the subject, see for instance \cite{Lefschetz,Warner,Zelinsky} for a general introduction.  Certain results we will discuss for pseudocompact objects hold for this larger and more difficult class of objects, but our focus is clarity of exposition rather than generality, and so we will not attempt to present results in the maximum possible generality.  

We give some simple examples of useful properties of pseudocompact algebras and modules that follow essentially from Lemma \ref{Lemma PC is LC}.  Firstly, a pseudocompact vector space $V$ has the discrete topology if, and only if, it has finite dimension: if $V$ is finite dimensional then there is an open subspace $W$ of smallest possible dimension, and this $W$ must contain every open subspace of $V$.  Now since $V$ is Hausdorff and the open subspaces of $V$ form a basis of neighbourhoods of $0$, it follows that $W$ must be $\{0\}$ and hence $V$ is discrete.  If $V$ is discrete and has infinite dimension, then choose a linearly independent set $\{v_1,v_2,v_3,\hdots\}$ of vectors in $V$.  For each $n\in \N$, define the affine subspace
$$W_n = (v_1+\hdots+v_n) + \langle v_{n+1},v_{n+2}, \hdots\rangle.$$
The intersection of a finite number of $W_n$ is non-empty, but the intersection of all the $W_n$ is empty, showing that $V$ is not linearly compact and hence not pseudocompact.  It follows from this observation that a closed subspace $W$ of a pseudocompact vector space $V$ is open if, and only if, it has cofinite dimension: one simply looks at the inverse image of $0$ under the continuous projection $V\to V/W$.

\begin{lemma} [adaptation of Proposition 0.3.3 from \cite{wilson}] \label{lemmaWilson}
Let $A$ be a pseudocompact vector space and let $\mathcal{I}$ be a collection of open subspaces of $A$ such that whenever $X,Y\in \mathcal{I}$, there is $Z\in \mathcal{I}$ contained in $X\cap Y$, and such that $\bigcap_{X\in \mathcal{I}} X=0$.  If $C$ is a closed subspace of $A$, then
$$
	C=\bigcap_{X\in \mathcal I} (C+X).
$$
\end{lemma}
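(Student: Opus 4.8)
The plan is to prove the two inclusions separately. The inclusion $C\subseteq \bigcap_{X\in\mathcal I}(C+X)$ is immediate, since $C\subseteq C+X$ for every $X\in\mathcal I$. The substance lies in the reverse inclusion, which amounts to showing that any $a\in\bigcap_{X\in\mathcal I}(C+X)$ already lies in $C$.

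The key reformulation I would use is that $a\in C+X$ is equivalent to the non-emptiness of the intersection $(a+C)\cap X$: writing $a=c+x$ with $c\in C$, $x\in X$ says precisely that the element $a-c=x$ lies both in the coset $a+C$ (as $-c\in C$) and in $X$. So, fixing such an $a$, I would consider the family $\mathcal Y=\{(a+C)\cap X : X\in\mathcal I\}$. Each member is non-empty by hypothesis, and each is a closed affine subspace of $A$: the coset $a+C$ is closed because $C$ is closed (translation is a homeomorphism) and affine by definition; each $X$, being an open subspace, is also closed (the complement of an open subgroup is a union of its non-trivial cosets, all open); and a non-empty intersection of affine subspaces is again affine.

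Next I would verify that $\mathcal Y$ has the finite intersection property. Given $X_1,\dots,X_n\in\mathcal I$, the downward-directedness hypothesis, applied inductively, yields $Z\in\mathcal I$ with $Z\subseteq X_1\cap\cdots\cap X_n$, whence $(a+C)\cap Z\subseteq \bigcap_{i}\big((a+C)\cap X_i\big)$; since the left-hand side is non-empty, so is the right. Linear compactness (Lemma \ref{Lemma PC is LC}), in its contrapositive form, then guarantees that the total intersection $\bigcap_{X\in\mathcal I}\big((a+C)\cap X\big)$ is non-empty. Finally, this intersection equals $(a+C)\cap\bigcap_{X\in\mathcal I}X=(a+C)\cap\{0\}$ by the assumption $\bigcap_{X\in\mathcal I}X=0$, so $0\in a+C$, i.e. $a=-c$ for some $c\in C$, giving $a\in C$.

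The only delicate points are bookkeeping ones rather than conceptual: making sure the sets $(a+C)\cap X$ are genuinely closed affine subspaces, so that Lemma \ref{Lemma PC is LC} applies, and that the directedness hypothesis is correctly promoted from pairs to arbitrary finite subfamilies. The conceptual engine is entirely linear compactness, converting the finite intersection property into non-emptiness of the full intersection.
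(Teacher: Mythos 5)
Your proof is correct and is essentially the paper's argument in contrapositive-free form: both reduce the problem to applying linear compactness (Lemma \ref{Lemma PC is LC}) to the family of closed affine subspaces built from $a+C$ and the members of $\mathcal{I}$, using the directedness hypothesis to pass between finite intersections and single elements of $\mathcal{I}$. The only cosmetic difference is that the paper fixes $a\notin C$ and extracts a single $Y\in\mathcal{I}$ with $(a+C)\cap Y=\varnothing$, whereas you run the same argument forward via the finite intersection property.
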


\begin{proof}
If $a\notin C$ then 
$$(a+C)\cap \bigcap_{X\in \mathcal I} X=\varnothing.$$ By Lemma \ref{Lemma PC is LC} there are $X_1,\dots,X_n\in \mathcal I$ so that 
$$(a+C)\cap (X_1\cap \dots \cap X_n)=\varnothing.$$
There is $Y\in \mathcal I$ such that $Y\subseteq X_1\cap\dots\cap X_n$. So $(a+C)\cap Y=\varnothing$ , hence $a\notin C+Y$.  We thus obtain the inclusion
$$
	\bigcap_{X\in \mathcal I} (C+X) \subseteq C+\bigcap_{X\in \mathcal I} X = C,
$$
the other being trivial.
\end{proof}

\begin{corol}\label{Corol closed ideal in max ideal}
Every proper closed (left) ideal of $A$ is contained in a maximal closed (left) ideal of $A$. 
\end{corol}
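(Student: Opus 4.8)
The plan is to sidestep a direct appeal to Zorn's lemma in the lattice of closed left ideals, and instead reduce the problem to an elementary statement about a finite-dimensional quotient, using Lemma \ref{lemmaWilson} to get there.

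First I would apply Lemma \ref{lemmaWilson}, taking for $\mathcal I$ the collection of all open (two-sided) ideals of finite codimension in $A$; these form a neighbourhood basis of $0$, are closed under finite intersection, and intersect to $0$, so the hypotheses hold. Writing $C=\bigcap_{X\in\mathcal I}(C+X)$ and using that $C$ is proper (so $1\notin C$), I obtain some $X\in\mathcal I$ with $1\notin C+X$. Set $J:=C+X$. Since $X$ is two-sided, $J$ is a left ideal; it contains the open ideal $X$, so it is itself open of finite codimension; and by the choice of $X$ it is proper and contains $C$.

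Next I would pass to the finite-dimensional discrete quotient $\bar A := A/X$, in which the image $\bar J$ of $J$ is a proper left ideal. In a finite-dimensional algebra every proper left ideal lies in a maximal left ideal (a dimension count suffices), so I choose a maximal left ideal $\bar M\supseteq \bar J$ and let $M$ be its preimage under the continuous projection $A\to\bar A$. Then $M$ is closed (the preimage of a closed set), proper, open of finite codimension, and contains $J\supseteq C$.

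The final and genuinely important observation is that $M$ is not merely maximal among closed left ideals containing $X$, but is an \emph{abstractly} maximal left ideal of $A$: indeed $A/M\cong \bar A/\bar M$ is a simple module, so there is no left ideal whatsoever strictly between $M$ and $A$. In particular $M$ is a maximal closed left ideal containing $C$, as required. The main obstacle to be aware of is precisely that the naive approach via Zorn's lemma fails here, because the closure of the union of an ascending chain of proper closed ideals need not be proper: for instance, in $A=\prod_{n\in\N}\F_p$ the proper closed ideals $I_N=\{(a_n)_{n\in\N}\mid a_n=0 \textnormal{ for all } n>N\}$ form an ascending chain whose union is the dense ideal $\bigoplus_{n\in\N}\F_p$, with closure all of $A$, so this chain has no upper bound inside the poset of proper closed left ideals. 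Reducing to a finite-dimensional quotient avoids this pathology completely, and the only step demanding care is the recognition that the pulled-back ideal is abstractly maximal.
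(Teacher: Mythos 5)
Your proof is correct and follows essentially the same route as the paper's: both invoke Lemma \ref{lemmaWilson} to produce a proper open (left) ideal $C+X$ containing $C$, then pass to a finite-dimensional quotient to extract a maximal left ideal whose preimage is open (hence closed) and abstractly maximal. The only differences are cosmetic --- you quotient by $X$ rather than by $C+X$, and you spell out the correspondence-theorem details and the failure of the naive Zorn's-lemma argument, which the paper leaves implicit.
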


\begin{proof}
Let $\mathcal{I}$ denote the set of open ideals of $A$.  If $C$ is a proper closed (left) ideal of $A$ then, since $C = \bigcap_{X\in \mathcal I} (C+X)$  by Lemma \ref{lemmaWilson}, some $C+X$ is proper in $A$. Hence $C$ is contained in a proper open left ideal $C+X$ (which is an ideal if $C$ is) and the result follows because $A/(C+X)$ is finite dimensional.
\end{proof}


We give a (probably well-known) technical lemma.  Recall that two proper left ideals of a ring $A$ are said to be coprime when their sum is $A$.

\begin{lemma}
Let $X, Y, Z$ be open left ideals of $A$ with $Z$ maximal.  If neither $A/X$ nor $A/Y$ have composition factors isomorphic to $A/Z$, then $X\cap Y$ and $Z$ are coprime.  
\end{lemma}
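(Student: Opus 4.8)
The plan is to reduce coprimality of $X\cap Y$ and $Z$ to a statement about composition factors, exploiting the maximality of $Z$. First I would observe that $(X\cap Y)+Z$ is a left ideal of $A$ containing $Z$, so by maximality of $Z$ it must equal either $Z$ or all of $A$. Hence proving that $X\cap Y$ and $Z$ are coprime amounts to showing that $X\cap Y$ is \emph{not} contained in $Z$, and I will establish this by contradiction.

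Next, since $X$ and $Y$ are open they have finite codimension, so $X\cap Y$ is open of finite codimension, and the diagonal map $A\to A/X\oplus A/Y$ sending $a\mapsto (a+X,\,a+Y)$ has kernel exactly $X\cap Y$. This yields an embedding $A/(X\cap Y)\hookrightarrow A/X\oplus A/Y$ of finite dimensional left $A$-modules. The composition factors of the right-hand module are precisely those of $A/X$ together with those of $A/Y$, and by hypothesis none of these is isomorphic to the simple module $A/Z$. Since the composition factors of a submodule form a subcollection of those of the ambient module, $A/(X\cap Y)$ has no composition factor isomorphic to $A/Z$.

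Finally, suppose for contradiction that $X\cap Y\subseteq Z$. Then the identity map of $A$ induces a surjection $A/(X\cap Y)\twoheadrightarrow A/Z$, exhibiting the simple module $A/Z$ as a quotient, and therefore as a composition factor, of $A/(X\cap Y)$. This contradicts the conclusion of the previous paragraph, so $X\cap Y\not\subseteq Z$ and coprimality follows at once.

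The only point requiring care is that every module in sight is finite dimensional, so that composition series exist and the usual bookkeeping of composition factors (additivity along short exact sequences, and hence compatibility with passage to submodules and quotients) is available; this is guaranteed precisely because $X$, $Y$ and $Z$ are open, hence of finite codimension in $A$. I do not anticipate a serious obstacle here: the argument is essentially the finite dimensional one, with pseudocompactness entering only to ensure that open ideals have finite codimension.
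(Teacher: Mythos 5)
Your proof is correct and takes essentially the same route as the paper's: both use maximality of $Z$ to reduce coprimality to showing $X\cap Y\not\subseteq Z$, and both rule this out by observing that containment would make $A/Z$ a composition factor of $A/(X\cap Y)$, whose factors all come from $A/X$ and $A/Y$. The only cosmetic difference is that you obtain this last fact from the diagonal embedding $A/(X\cap Y)\hookrightarrow A/X\oplus A/Y$, while the paper uses the exact sequence with $X/(X\cap Y)\cong (X+Y)/Y$; the bookkeeping is identical.
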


\begin{proof}
As $Z$ is maximal, we need only check that $X\cap Y\not\subseteq Z$.  If it were, then we would have a projection $A/(X\cap Y)\to A/Z$ and hence $A/Z$ would appear as a composition factor of $A/(X\cap Y)$.  But the composition factors of $A/(X\cap Y)$ are the factors of $A/X$ and the factors of $X/(X\cap Y) \iso (X+Y)/Y$, so $A/Z$ does not appear.
\end{proof}

\begin{lemma}\label{corol prod of simples is cyclic}
Let $A$ be a pseudocompact algebra and $\mathcal{X}$ the set containing one  representative of each isomorphism class of simple pseudocompact left $A$-module and no other elements.  The module $M = \prod_{X\in \mathcal{X}}X$ is cyclic.
\end{lemma}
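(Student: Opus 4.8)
The plan is to exhibit a single element of $M$ that generates it. Since every $X\in\mathcal{X}$ is a simple pseudocompact module, it is finite dimensional and discrete, and any nonzero $m_X\in X$ satisfies $Am_X=X$. I would therefore choose $m=(m_X)_{X\in\mathcal{X}}\in M$ with each coordinate $m_X$ nonzero, and show first that the cyclic submodule $Am$ is dense in $M$, and then that it is closed, so that $Am=M$.

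For density, recall that $M=\prod_{X\in\mathcal{X}}X$ carries the product topology and each factor is discrete, so a subset of $M$ is dense precisely when it surjects, under the coordinate projection, onto every finite subproduct $\prod_{X\in F}X$ with $F\subseteq\mathcal{X}$ finite. The heart of the matter is thus the finite statement: for pairwise non-isomorphic simple modules $X_1,\dots,X_n$ and nonzero $m_i\in X_i$, the map $A\to\prod_{i=1}^n X_i$ given by $a\mapsto(am_i)_i$ is surjective. This is a Chinese Remainder Theorem argument. Setting $J_i=\{a\in A:am_i=0\}$, the map $a\mapsto am_i$ is continuous into the discrete space $X_i$, so $J_i$ is an open maximal left ideal with $A/J_i\iso X_i$ via $a+J_i\mapsto am_i$ (using $Am_i=X_i$). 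Because the $X_i$ are pairwise non-isomorphic simple modules, the composition factors of $A/\bigcap_{j\neq i}J_j$ all lie among the $X_j$ with $j\neq i$, and in particular none is isomorphic to $A/J_i$; iterating the preceding lemma then shows that $\bigcap_{j\neq i}J_j$ and $J_i$ are coprime for each $i$. The usual Chinese Remainder argument now produces, for any prescribed target $(b_i m_i)_i$, an element $a\in A$ with $a\equiv b_i\pmod{J_i}$ for all $i$, i.e. $am_i=b_i m_i$, giving the required surjectivity and hence density of $Am$.

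To upgrade density to genuine cyclicity, I would observe that $\varphi:A\to M$, $a\mapsto am$, is a continuous homomorphism of pseudocompact modules from $A$, which is linearly compact by Lemma \ref{Lemma PC is LC}. Its image $Am$ is therefore linearly compact, and a linearly compact subspace of the Hausdorff space $M$ is closed. A dense closed subspace is the whole space, so $Am=M$ and $M$ is cyclic.

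I expect the main obstacle to be exactly the interplay between the algebraic Chinese Remainder Theorem, which operates only on finite subproducts, and the topology of the (possibly infinite) product: a single element $m$ can satisfy only finitely many coordinate conditions at once, so naively one cannot hit every element of $M$. It is precisely the linear compactness of $A$ — forcing $Am$ to be closed — that converts the finitely-supported density statement into the equality $Am=M$ and makes one element suffice for the full product. Getting the coprimality bookkeeping right (so that the preceding lemma applies to each $J_i$ against the intersection of the others) is the only other point requiring care.
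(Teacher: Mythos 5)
Your proof is correct. Its core -- reducing the problem to surjectivity onto finite subproducts $\prod_{i=1}^n X_i$ and establishing that by a Chinese Remainder argument, with the preceding coprimality lemma applied iteratively to the open maximal ideals $J_i$ -- is exactly the paper's argument; the paper merely phrases it with fixed representatives $A/I_i$ and the natural map $A\to\prod_i A/I_i$ rather than with a chosen generating element $(m_X)_X$, which amounts to the same thing since your $J_i$ are precisely such maximal open ideals. Where you genuinely diverge is in the passage from finite subproducts to the full product. The paper views $M$ as the inverse limit of its finite subproducts and invokes exactness of $\invlim$ in the category of pseudocompact modules (a fact it cites rather than proves) to conclude that levelwise surjectivity of $A\to \prod_{X\in F}X$ gives surjectivity of $A\to M$. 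You argue instead topologically: finite-level surjectivity is exactly density of $Am$, and $Am$ is closed because it is the continuous image of the linearly compact space $A$ and linearly compact subspaces of Hausdorff spaces are closed. This route is valid, but be aware it rests on two facts proved nowhere in the paper -- that continuous images of linearly compact spaces are linearly compact, and that such subspaces of Hausdorff spaces are closed. Both are standard (they appear in Zelinsky's paper, cited for Lemma \ref{Lemma PC is LC}) and are essentially the ingredients of the proof that $\invlim$ is exact in this category, so the two proofs trade one black box for another. What yours buys is transparency: it makes explicit that density plus a compactness property is what rescues the naive CRT, which, as you correctly observe, can only control finitely many coordinates at a time; what the paper's buys is brevity, since the exactness of $\invlim$ is a tool it needs and cites repeatedly anyway.
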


\begin{proof}
Take as our representative of each isomorphism class the module $A/I$ with $I$ some maximal closed left ideal of $A$.  We claim there is a surjective module map $A\to M$.  As $\invlim$ is exact, it is sufficient to check that the natural map
$$A\to \prod_{i=1}^n A/I_i$$
is surjective for each finite set of $A/I_1,\hdots,A/I_n\in \mathcal{X}$, and for this we follow the proof of the Chinese Remainder Theorem.  If $n=1$, surjectivity is immediate, so suppose the result holds given fewer than $n$ left ideals.  The left ideals $I_1\cap \hdots\cap I_{n-1}$ and $I_n$ are coprime by the previous lemma and so we can write $1 = x+y$ with $x\in I_1\cap \hdots \cap I_{n-1}$ and $y\in I_n$.  Given $(b+(I_1\cap\hdots\cap I_{n-1}), c + I_n)$, the element $a = by+cx$ maps onto it, so the map $A\to A/(I_1\cap\hdots \cap I_{n-1})\oplus A/I_n$ is onto.  But by induction, the map 
$$A/(I_1\cap\hdots \cap I_{n-1})\to A/I_1\oplus \hdots\oplus A/I_{n-1}$$
is onto, and hence the map $A\to A/I_1\oplus \hdots\oplus A/I_{n}$ is onto, as required.
\end{proof}

\begin{lemma}\label{lemma inclusion in a semisimple module splits}
Let $A$ be a pseudocompact algebra, $Y$ an indexing set and $V = \prod_{y\in Y}S_y$ a product of simple pseudocompact $A$-modules.  If $U$ is a closed submodule of $V$ then $U$ is a continuous direct summand of $V$.
\end{lemma}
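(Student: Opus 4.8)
The goal is to produce a closed submodule $W\le V$ together with a continuous $A$-linear projection of $V$ onto $U$ along $W$; equivalently, a continuous $A$-linear section of the quotient map $V\to V/U$, whose image will be $W$. My plan is to reduce to the \emph{homogeneous} case, in which all the $S_y$ are isomorphic to a single simple module $S$, and then to recognise that case as a statement about linearly compact vector spaces over a division ring.

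For the reduction, group the factors by isomorphism type, writing $V=\prod_\lambda V_\lambda$ with $V_\lambda=\prod_{y:\,S_y\cong S_\lambda}S_y$. The coordinate-grouping projections give continuous orthogonal idempotent $A$-endomorphisms $e_\lambda$ of $V$ with $V=\prod_\lambda e_\lambda V$ and $e_\lambda V=V_\lambda$. The key point is that $e_\lambda(U)\subseteq U$: on each finite quotient $M_F=V/V_F$ (with $V_F$ the basic open submodules of Example \ref{Ex.products}) the induced idempotent is the isotypic projector of the \emph{finite-dimensional} semisimple module $M_F$, which preserves the submodule $U_F=(U+V_F)/V_F$ by Schur's lemma; lifting this back along all $F$ by Lemma \ref{lemmaWilson} gives $e_\lambda(U)\subseteq\bigcap_F(U+V_F)=U$. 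Hence $U=\prod_\lambda(U\cap V_\lambda)$, and it suffices to split each $U\cap V_\lambda$ inside $V_\lambda$; the product of the resulting complements is closed and the resulting projection is continuous, being assembled from the continuous $e_\lambda$.

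In the homogeneous case set $D=\End_A(S)$, a finite-dimensional division $k$-algebra by Schur's lemma and the finite-dimensionality of simple pseudocompact modules. Applying the functor $\Hom_A(S,-)$ identifies the isotypic component generated by $S$ with the category of $D$-vector spaces, carrying $V=\prod_{y\in Y}S$ to the $D$-vector space $D^Y$ and, in its continuous incarnation, carrying closed $A$-submodules to closed $D$-subspaces and continuous $A$-maps to continuous $D$-maps. Under this dictionary the claim becomes: \emph{a closed subspace of a linearly compact $D$-vector space is a topological direct summand}. Since $D^Y$ and its quotient by a closed subspace are linearly compact (Lemma \ref{Lemma PC is LC}), and every linearly compact vector space over a division ring is topologically isomorphic to some $D^J$ and hence admits a topological basis, one lifts a topological basis of the quotient to obtain a continuous $D$-linear section of the projection; its image is the desired closed complement, and since a continuous bijection of linearly compact spaces is a topological isomorphism, the summand is genuinely continuous.

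The main obstacle is exactly this last splitting. Naively one would write $V=\invlim_F M_F$, split each finite-dimensional semisimple $M_F=U_F\oplus W_F$, and take $W=\invlim_F W_F$; but arbitrary complements $W_F$ cannot be chosen compatibly along the transition maps, so no inverse limit of complements is directly available. Linear compactness is precisely what repairs this incompatibility: it guarantees the existence of the topological basis (equivalently, of a coherent system of splittings) and simultaneously delivers closedness of the complement and continuity of the projection. I would either invoke the structure theory of linearly compact vector spaces from the cited linear-compactness literature for this step, or prove it directly by a Zorn's lemma argument choosing a maximal topologically independent family spanning a complement and verifying with Lemma \ref{Lemma PC is LC} that its closed span is a continuous complement.
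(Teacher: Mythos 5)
Your overall architecture is legitimate and genuinely different from the paper's: you first decompose $U$ along isotypic components (your argument that $e_\lambda(U)\subseteq U$, via the finite quotients and Lemma \ref{lemmaWilson}, is correct), and then translate the homogeneous case through $\Hom_A(S,-)$ into a statement about closed subspaces of linearly compact vector spaces over $D=\End_A(S)$. Both reductions are sound, although the ``dictionary'' needs more verification than you give it: one must check, for instance, that a closed submodule $U$ of $\prod_Y S$ is recovered from $\Hom_A(S,U)$, which requires lifting maps $S\to U/(U\cap V_F)$ to maps $S\to U$ through the inverse system (this works because the transition maps split, the finite quotients being semisimple, but it is not automatic). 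By contrast, the paper needs no reduction at all: it runs a single Zorn's Lemma argument directly on the mixed product, dual to the classical proof that a submodule of a semisimple module is a direct summand.

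The genuine gap is that the statement everything has been reduced to --- a closed subspace of a linearly compact $D$-vector space is a topological direct summand --- is exactly the content of the lemma itself (it is the special case $A=D$, all $S_y=D$), and your proposal never proves it; it only points at the literature or at a Zorn argument. The citation route is defensible but outsources the entire difficulty of a main-thread lemma that the paper proves from scratch. Worse, the fallback you sketch --- Zorn over ``a maximal topologically independent family spanning a complement'' --- fails for precisely the reason you yourself identify when dismissing the naive inverse-limit-of-complements approach: the property ``the closed span meets $U$ trivially'' does not pass to unions of chains. Concretely, take $V=k^{\N}$ and $U=\{(x,x,x,\dots)\,:\,x\in k\}$, a closed one-dimensional subspace; the finite families $\{e_1,\dots,e_n\}$ of standard vectors form a chain, each spanning a closed subspace meeting $U$ trivially, yet the closed span of their union is all of $V$. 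So that poset has chains with no upper bound, and Zorn's Lemma cannot be applied to it. The paper's proof evades this by applying Zorn's Lemma to the dual data: subsets $X$ of the index set for which the coordinate projection $\pi_X\iota:U\to\prod_{y\in X}S_y$ is \emph{surjective}. Surjectivity, unlike trivial intersection, does pass to limits along chains by the exactness of $\invlim$, and a maximal such $X$ makes $\pi_X\iota$ an isomorphism, giving the splitting $(\pi_X\iota)^{-1}\pi_X$. If you want to keep your reduction, you would still need this argument (or Lefschetz duality between linearly compact and discrete vector spaces) to finish the homogeneous case --- at which point the reduction buys conceptual clarity but no economy.
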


\begin{proof}
This proof is dual to the abstract case (see for instance, \cite[Prop. 1.7.1]{StBo}).  Denote by $\mathcal{X}$ the set of subsets $X$ of $I$ such that the composition 
$$U\xrightarrow{\iota}V\xrightarrow{\pi_X}\prod_{i\in X}S_i$$
is surjective, ordered by inclusion.  Then $\mathcal{X}\neq\varnothing$ because $\varnothing\in \mathcal{X}$.  Let $\mathcal{C}$ be a chain in $\mathcal{X}$. Using the notation from Example \ref{Ex.products}, we have natural projection maps $\varphi_{X} = \varphi_{XY}$ for each $X\in \mathcal{C}$ and these maps induce a so-called map of inverse systems (cf.\ \cite[\S 1.1]{RibZal}) $\{\varphi_{X}\iota : U \to \prod_{i\in X}S_i\}$.  
Each of these maps is onto and hence the limit map
$$
    \invlim \pi_X\iota:  U \to \invlim_{{}_{X\in \mathcal{C}}} \prod_{i\in X}S_i = \prod_{i\in \bigcup X}S_i
$$
is onto by the exactness of $\invlim$. 
It follows that $\bigcup X$ is an upper bound for $\mathcal C$ in $\mathcal X$. So by Zorn's Lemma, $\mathcal{X}$ contains a maximal element $X$.


We claim that the composition $\pi_X\iota$ is injective (and hence an isomorphism).  If not, then $U\cap \prod_{i\in I\backslash X}S_i\neq 0$ and hence there is an element $u = (u_i)$ of $U$ such that $u_i=0$ for every $i\in X$ but $u_j\neq 0$ for some $j\not\in X$.  Abusively denoting by $S_j$ and $\prod_{i\in X}S_i$ the images of $S_j$ and $\prod_{i\in X}S_i$ under $\pi_{X\cup \{j\}}$, we note that $S_j\cap \pi_{X\cup\{j\}}(U)\neq 0$ and hence, $S_j$ being simple, $S_j\subseteq \pi_{X\cup\{j\}}(U)$.  Given $z\in \prod_{i\in X}S_i$, we know since $\pi_X\iota$ is onto that there is an element $a\in U$ such that $\pi_{X\cup\{j\}}(a) = z + z'$ with $z'\in S_j$.  But $z' = \pi_{X\cup \{j\}}(b)$ for some $b$ and hence $z = \pi_{X\cup\{j\}}(a-b)$.  Thus $S_j$ and $\prod_{i\in X}S_i$ are contained in $\tn{Im}(\pi_{X\cup\{j\}}\iota)$, showing that $\pi_{X\cup\{j\}}\iota$ is surjective and contradicting the maximality of $X$.

It follows that $\pi_X\iota$ is an isomorphism, so that $\iota$ splits via $(\pi_X\iota)^{-1}\pi_X$.
\end{proof}

The Krull-Schmidt Theorem \cite[Corollary 19.22]{Lam91} tells us that a finitely generated module for a finite dimensional algebra can be decomposed as a direct sum of finitely many indecomposable modules in an essentially unique way.  We have just seen in Lemma \ref{corol prod of simples is cyclic} that a finitely generated module for a pseudocompact algebra may well be an infinite product of indecomposable modules.  The following very useful analogue of the Krull-Schmidt Theorem tells us that this is essentially as bad as it can get, and that decompositions of a finitely generated module as a product of indecomposables are essentially unique, in a precise way:

\begin{prop}(\cite[Proposition 3.5 and before]{MSZ})\label{prop exchange property}
Let $A$ be a pseudocompact algebra and $M$ a finitely generated pseudocompact $A$-module. 
\begin{enumerate}
    \item $M$ is isomorphic to a direct product of indecomposable $A$-modules.
    \item (the Exchange Property) If $S$ is a set and if $X$ is a continuous direct summand of the module $Y = \prod_{s\in S}Y_s$, where each $Y_s$ is an indecomposable direct summand of $M$, then there exists a subset $T$ of $S$ with the property that
    $$Y = X \oplus \prod_{s\in T}Y_s.$$
\end{enumerate}
\end{prop}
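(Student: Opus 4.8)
The plan is to route everything through endomorphism rings and idempotent decompositions, adapting the classical Krull--Schmidt--Azumaya theory to the pseudocompact setting, where direct \emph{products} play the role of direct sums and the exactness of $\invlim$ replaces the exactness of direct limits.

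For part (1) I would first show that an indecomposable finitely generated pseudocompact module $N$ has local endomorphism ring. The inputs are that $E = \End_A(N)$ is again a pseudocompact algebra, so it carries a topological radical with $E/\rad E$ semisimple, and that idempotents lift along the defining inverse limit of a pseudocompact algebra. Continuous direct summands of $N$ correspond to idempotents of $E$, so indecomposability of $N$ says $E$ has no idempotents besides $0$ and $1$; idempotent lifting transports this to $E/\rad E$ (a nontrivial idempotent upstairs would lift a nontrivial idempotent downstairs), forcing that semisimple quotient to be a division ring and hence $E$ to be local. To treat a general finitely generated $M$, I would pass to $\End_A(M)$ and decompose its identity into a family of orthogonal primitive idempotents --- the semisimple quotient is a product of matrix algebras over division rings, in which $1$ is visibly such a product, and these idempotents lift orthogonally --- and read off $M \iso \prod_i e_i M$ with each $e_i M$ indecomposable.

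For part (2) I would dualize the Crawley--J\'onsson--Warfield argument for families of modules with local endomorphism rings. Writing the given splitting as $Y = X \oplus X'$ with continuous projection $\rho : Y \to X'$ having kernel $X$, the goal becomes to produce $T \subseteq S$ for which the restriction of $\rho$ to the subproduct $\prod_{s\in T} Y_s$ is an isomorphism onto $X'$; this is equivalent to $Y = X \oplus \prod_{s\in T} Y_s$. I would build such a $T$ by Zorn's Lemma, taking as poset the subsets on which $\rho$ restricts to a split continuous monomorphism onto a summand of $X'$. The locality of each $\End_A(Y_s)$ supplies the crucial dichotomy at the inductive step: a given index can be absorbed into $T$ or discarded outright, never landing in a partial state. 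Passage to the union along a chain is handled exactly as in the proof of Lemma \ref{lemma inclusion in a semisimple module splits}: since $\prod_{s\in \bigcup T_\alpha} Y_s = \invlim_\alpha \prod_{s\in T_\alpha} Y_s$, the exactness of $\invlim$ upgrades the chain of partial isomorphisms to one for the union. Maximality of $T$ together with locality then forces $\rho|_{\prod_{s\in T}Y_s}$ to be onto.

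The main obstacle is part (2): the entire Zorn argument must be run inside the category of pseudocompact modules, so at every stage the relevant submodules must be closed and the relevant maps continuous, and one must verify that the maximal $T$ yields an honest internal decomposition $Y = X \oplus \prod_{s\in T}Y_s$ rather than merely a dense or abstract one. Controlling closure and continuity at the limiting step --- gluing the summands indexed by an ascending chain --- is precisely where linear compactness (Lemma \ref{Lemma PC is LC}) and the exactness of $\invlim$ must be invoked, and is what genuinely distinguishes this from the classical module-theoretic proof.
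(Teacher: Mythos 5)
The first thing to note is that the paper offers no proof to compare against: Proposition \ref{prop exchange property} is quoted from \cite[Proposition 3.5 and before]{MSZ} without proof, so your attempt must be judged on its own merits. Your part (1) follows the standard route and is essentially sound: $\End_A(M)$ is pseudocompact when $M$ is finitely generated (this is the ``and before'' of the citation), idempotents --- indeed infinite orthogonal families of idempotents whose sum converges to $1$ --- lift modulo the radical by Gabriel's results, and a primitive idempotent $e$ gives an indecomposable factor $eM$ because $e\End_A(M)e$ is local (cf.\ Remark \ref{rem.locality}). The soft points (lifting an infinite orthogonal family so that the lifted sum still converges to $1$, and verifying $M\iso\prod_i e_iM$) are real but standard.

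Part (2), however, has a fatal gap at the chain step of your Zorn argument. Your poset consists of subsets $T\subseteq S$ on which $\rho$ restricts to a split monomorphism, and you claim a chain $\{T_\alpha\}$ has upper bound $\bigcup_\alpha T_\alpha$ ``exactly as in'' Lemma \ref{lemma inclusion in a semisimple module splits}. This is false, and the reason is structural: in that lemma a \emph{fixed} source $U$ maps compatibly to the inverse system of targets $\prod_{i\in X}S_i$, so surjectivity passes to the limit by exactness of $\invlim$; in your setup the sources $\prod_{s\in T_\alpha}Y_s$ vary, and the restrictions $\rho|_{\prod_{T_\alpha}}$ do \emph{not} commute with the transition projections (an element supported on $T_\beta\setminus T_\alpha$ is not killed by $\rho$), so they do not form a map of inverse systems and exactness of $\invlim$ says nothing. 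Worse, injectivity genuinely fails to pass to the union, because $\bigcup_\alpha\prod_{s\in T_\alpha}Y_s$ is only dense in $\prod_{s\in\bigcup_\alpha T_\alpha}Y_s$. Explicitly: take $A=k$, $M=k$, $Y=\prod_{n\in\N}k$, $X=k\cdot(1,1,1,\hdots)$, $X'=\{y\in Y\mid y_1=0\}$, $\rho(y)=y-y_1(1,1,1,\hdots)$, and $T_n=\{1,\hdots,n\}$. Each $\rho|_{\prod_{T_n}}$ is a split continuous monomorphism onto a finite dimensional (hence complemented) subspace of $X'$, so every $T_n$ lies in your poset; but the only subset of $S=\N$ containing every $T_n$ is $\N$ itself, and $\rho|_Y=\rho$ has kernel $X\neq 0$. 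This chain has no upper bound, so Zorn's Lemma cannot be invoked (the proposition itself is of course true here: $T=\N\setminus\{1\}$ works). A correct argument must track conditions stable under inverse limits --- for instance surjectivity of the complementary projections $\pi_{S\setminus T}|_X:X\to\prod_{s\in S\setminus T}Y_s$, noting that $Y=X\oplus\prod_{s\in T}Y_s$ if and only if this map is an isomorphism --- or must pass by duality to discrete modules and invoke the Crawley--J\'onsson--Warfield theorem there; the coordinate-by-coordinate dualization of the direct-sum proof does not go through.
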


Given closed ideals $I, I'$ of the pseudocompact algebra $A$, denote by $I\cdot I'$ the topological closure of the abstract product of $I$ and $I'$.  Given a closed ideal $I$ and $n\geqslant 2$, define recursively the closed ideal $I^n$ to be  $I^{n-1}\cdot I$.

\begin{ex}
It is intuitively clear that the abstract product of two ideals need not be closed, but we did not encounter explicit examples in the literature so we present one here.  Let $A = k\db{x_1, x_2, \hdots}$ be the pseudocompact algebra of power series in infinitely many variables $x_i$. 
Let $I$ be the closed ideal of $A$ generated by the $x_i$.  The element $x = \sum_{i\in \N}{x_i}^2$ is in $I^2$, but it cannot be written as a finite sum of monomials, and hence is not an element of the abstract product of $I$ with itself.

Note that $I = J(A)$ (cf.\ \S\ref{subsection Jacobson}), so this is an example of a pseudocompact algebra $A$ for which $J^2(A)$ is not the abstract product of $J(A)$ with itself.
\end{ex}

\section{The Jacobson radical and semisimple pseudocompact algebras}\label{section Jacobson radical and ss}

The main result of this section is a long and familiar (to those who work with finite dimensional algebras) characterization of semisimple pseudocompact algebras. We extend the fundamental Artin-Wedderburn theorem, showing that a semisimple pseudocompact algebra is a product of matrix algebras over finite dimensional division algebras. As with Artinian algebras, the Jacobson radical is a useful tool to measure the failure of semisimplicity of a pseudocompact algebra. So we start by defining a topological version of the Jacobson radical and compare the definition with the classical one.  For finite dimensional algebras there are a huge number of equivalent descriptions of the Jacobson radical, which may not be equivalent for an arbitrary algebra.  We observe that (following a common pattern) the situation for pseudocompact algebras is as equally well behaved as the situation for finite dimensional algebras.

\subsection{The Jacobson radical}\label{subsection Jacobson}

Let $A$ be a pseudocompact algebra.  The \textit{topological Jacobson radical} $J(A)$ of $A$ is the intersection of the maximal closed  left ideals of $A$ 
(which by Lemma \ref{lemmaWilson} is the intersection of the maximal open left ideals of $A$).

\begin{ex}\label{examples Jacobson radicals}
\begin{enumerate}
\item Suppose that $A=k\db{x}$. Then $A$ has a unique maximal closed left ideal, generated by $x$, hence $J(A)=(x)$ has codimension 1 in $A$. 
Compare this with the polynomial algebra $k[x]$, whose Jacobson radical is $0$.

\item Consider the pseudocompact algebra 
$$A = \prod_{\N}k = k_0\times k_1 \times\hdots,$$ 
the product of a countable number of copies of the algebra $k$.  For each $n\in \N$, the closed ideal 
$$I_n = k_0\times\hdots\times k_{n-1}\times 0 \times k_{n+1}\times \hdots$$
has codimension 1 and so is maximal.  As $\bigcap_{n\in \N}I_n = 0$, it follows that $J(A) = 0$.

\item\label{Examples JacobsonRadical profinite groups} If $G$ is a profinite group, and $\cchar k$ does not divide the order of any continuous finite quotient group of $G$, then $J(k\db{G})=0$ -- this follows using Lemma \ref{radical limit of radicals} below and Maschke's Theorem for finite groups (see for instance \cite[Theorem 15.6]{CR62}).

\item Generalizing the first example, let $Q$ be any quiver.  Then $J(k\db{Q})$ is the closed ideal generated by the arrows of $Q$. This fact follows by \cite[Lemma 2.12]{IM17}, whose proof, given the results of the next section, does not require the corresponding semisimple algebra to be finite dimensional. 
Thus the completed path algebra gives a much better tool for the study of finite dimensional algebras treated as quotients of path algebras than the abstract path algebra, since for a finite dimensional algebra of the form $A = kQ/I$ with $I$ an admissible ideal, the Jacobson radical is generated as an ideal by the arrows of $Q$.  Meanwhile for even a finite quiver $Q$ with loops or cycles, the radical of the abstract path algebra $kQ$ is awkward to describe: it is the ideal with $k$-basis the so-called ``regular paths'' in $Q$ (we could not find a reference for precisely this fact but it follows from the more general \cite[Proposition 1.3]{CoelhoLiu}).

\end{enumerate}
\end{ex}

The following proposition gives a characterization of $J(A)$.  We say that a closed ideal $I$ of $A$ is \emph{pronilpotent} if $\bigcap_{n\in \N}I^n = 0$.  An element $a$ of an abstract algebra $A$ is a \emph{(topological) non-generator} if whenever $X$ is a subset of $A$ and the (closed) left ideal generated by $X\cup \{a\}$ is $A$, then the (closed) left ideal generated by $X$ is already $A$.

\begin{prop} \label{radicalChar}
Let $A$ be a pseudocompact algebra. The following subsets of $A$ are equal: 
\begin{enumerate}[label=\textnormal{(\arabic*)}]
\item The intersection of the maximal closed left ideals of $A$; \label{P1.IntLeft}
\item The intersection of the maximal closed right ideals of $A$;
\label{P1.IntRight}
\item The intersection of the maximal closed two sided ideals of $A$;
\label{P1.IntBilat}
\item The intersection of the (not necessarily closed) maximal left ideals of $A$; 
\label{P1.IntAll}
\item $\{x\in A\ |\  1-yx \mbox{ is left invertible for any} \ y\in A  \}$;
\label{P1.SetLeftInv}
\item $\{x\in A\ |\  1-yxz \mbox{ is invertible for any} \ y,z\in A  \}$; 
\label{P1.SetInv}
\item The set of all non-generators of $A$; \label{P1.NoGen}
\item The set of all topological non-generators of $A$;\label{P1.TopNoGen}
\item The intersection of the annihilators of the simple pseudocompact left $A$-modules;\label{P1.IntAnn}
\item The smallest closed submodule $V$ of $A$ such that $A/V$ is a product of simple $A$-modules;
\label{P1.ProdSimples}
\item The maximal closed pronilpotent ideal.
\label{P1.Pronilp}
\end{enumerate}
\end{prop}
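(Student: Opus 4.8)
The plan is to prove that each of the eleven sets coincides with~(1), the defining intersection $J := J(A)$ of the maximal closed left ideals. I would first dispatch the purely ring-theoretic identifications, which cost nothing topological: by the classical arguments for abstract rings (no completeness needed) the set (5) is contained in the intersection (4) of all maximal left ideals, (6) is contained in (5) (take $z=1$), and (4), (7), and the intersection of all maximal right ideals all coincide with the abstract Jacobson radical. The only genuinely topological input at this stage is the observation that a maximal closed left ideal $M$ is automatically a maximal \emph{abstract} left ideal: $A/M$ is a simple pseudocompact module, hence finite dimensional and discrete, hence simple as an abstract module. This gives the inclusion $(4)\subseteq(1)$, so the whole abstract cluster collapses onto~(1) as soon as I establish the single bridging inclusion $(1)\subseteq(6)$.

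For $(1)\subseteq(6)$ I would pass to the finite-dimensional quotients $A/U$, with $U$ ranging over the open (two-sided) ideals, so that $A=\invlim A/U$. If $x\in(1)$ then, since every maximal left ideal of $A$ containing $U$ is open and hence a maximal closed left ideal, the image $\bar x$ of $x$ lies in $J(A/U)$; as $J(A/U)$ is a nilpotent two-sided ideal of the finite-dimensional algebra $A/U$, for any $y,z$ the element $w=yxz$ satisfies $w^{k}\in U$ for $k\gg 0$, so $w^{n}\to 0$ and completeness gives $(1-w)^{-1}=\sum_{n\geqslant 0}w^{n}$. Hence $1-yxz$ is invertible and $x\in(6)$, closing the cluster $\{(1),(4),(5),(6),(7)\}$. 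The topological non-generators (8) I would handle by the direct argument used for the abstract case but with \emph{closed} ideals throughout, using Corollary~\ref{Corol closed ideal in max ideal} to put a proper closed left ideal inside a maximal one. The annihilator description (9) equals (1) because each nonzero element of a simple pseudocompact module has annihilator a maximal closed left ideal and conversely; this also shows $J$ is two-sided. The right-hand and two-sided versions (2), (3) follow by running the same finite-quotient argument on $A^{\mathrm{op}}$ (noting that (6) is left--right symmetric) and, for (3), using that in the Artinian quotient $A/U$ the radical is the intersection of the maximal two-sided ideals.

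For (10) I would show that $A/J$ is itself a product of simple modules and is the smallest such quotient. Smallness is easy: if $A/V\cong\prod_i S_i$ then composing with the projections exhibits $V$ as an intersection of maximal closed left ideals, whence $V\supseteq J$. To realise $A/J$ as a product of simples, embed $A/J$ as a closed submodule of $P=\prod_M A/M$ (over all maximal closed left ideals $M$); by Lemma~\ref{lemma inclusion in a semisimple module splits} this image is a continuous direct summand of $P$. Taking as ambient finitely generated module the cyclic product $\prod_{X\in\mathcal X}X$ of Lemma~\ref{corol prod of simples is cyclic}, each factor $A/M$ is an indecomposable summand of it, so the Exchange Property (Proposition~\ref{prop exchange property}) produces a subset $T$ with $P=(A/J)\oplus\prod_{M\in T}A/M$; then $A/J\cong\prod_{M\notin T}A/M$ is a product of simples, as required.

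Finally (11). That $J$ is closed and pronilpotent follows from the bridge above: $J^{m}\subseteq U$ eventually, so $\bigcap_n J^n\subseteq\bigcap_U U=0$. The reverse inclusion --- that every closed pronilpotent ideal $I$ lies in $J$ --- is the main obstacle. If $I\not\subseteq J=(9)$, then $IS=S$ for some simple pseudocompact module $S$, so the image of $I$ in the Artinian quotient $A/\mathrm{Ann}(S)$ is \emph{not} nilpotent and therefore contains a nonzero idempotent. The crux is to lift this idempotent: working through a cofinal tower of finite quotients, where successive kernels are nilpotent, one lifts compatibly to a nonzero idempotent $e\in I$ (using Lemma~\ref{lemmaWilson} to identify $I$ with $\invlim(I+U)/U$). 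But then $e=e^{n}\in I^{n}$ for all $n$, contradicting $\bigcap_n I^{n}=0$. This idempotent-lifting step is the one place the argument reaches beyond the lemmas already assembled, and it is where I would expect to spend the most care.
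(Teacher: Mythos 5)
Most of your proposal is correct and, with one exception, runs parallel to the paper's own proof: the classical cluster $(4)=(5)=(6)=(7)$, the observation that a maximal closed left ideal is abstractly maximal (giving $(4)\subseteq(1)$), the treatment of (8) via Corollary \ref{Corol closed ideal in max ideal}, the identification $(9)=(1)$, the passage to finite quotients for (2) and (3), and --- word for word --- your argument for (10) using Lemma \ref{lemma inclusion in a semisimple module splits}, Lemma \ref{corol prod of simples is cyclic} and the Exchange Property (Proposition \ref{prop exchange property}) are all exactly the paper's steps. Your bridge $(1)\subseteq(6)$ is genuinely different and perfectly sound: the paper instead proves $(1)\subseteq(5)$ by noting that $A(1-yx)$ is a \emph{closed} proper left ideal (finitely generated left ideals are closed, citing Brumer) and invoking Corollary \ref{Corol closed ideal in max ideal}, whereas you observe that the image of $yxz$ in each finite-dimensional quotient is nilpotent, so $(yxz)^n\to 0$ and completeness sums the geometric series. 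Your version buys the stronger inclusion into (6) directly and avoids the external closedness fact.

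The genuine gap is $(11)\subseteq(1)$. Your route rests on the claim that one can work ``through a cofinal tower of finite quotients, where successive kernels are nilpotent,'' and this is false in general: for $A=\prod_{\N}k$ or $\prod_{\N}M_2(k)$, the kernel of \emph{any} transition map between finite quotients is a nonzero product of matrix algebras, hence never nilpotent, so no such tower exists. Lifting idempotents through an inverse limit without nilpotent kernels is a real theorem (Gabriel's, cited in Remark \ref{rem.locality}), and the statement you actually need --- every closed ideal not contained in $J$ contains a nonzero idempotent --- is deduced in that remark from Proposition \ref{ssCharacterization}, which itself depends on the proposition you are proving; so it cannot be invoked here without circularity, and you flag the step as incomplete yourself. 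The repair is much lighter and is what the paper does: show directly that a pronilpotent closed ideal $P$ is killed by every simple pseudocompact module. Concretely, let $U$ be an open two-sided ideal. If $x\in\bigcap_n(P^n+U)$, then the sets $(x+U)\cap P^n$ form a descending chain of nonempty closed affine subspaces, so by Lemma \ref{Lemma PC is LC} they have a common point, which lies in $\bigcap_n P^n=0$; hence $x\in U$, i.e.\ $\bigcap_n(P^n+U)=U$. Since $A/U$ is finite dimensional, the descending chain $(P^n+U)/U$ stabilizes at its intersection, so $P^N\subseteq U$ for some $N$; thus the image of $P$ in $A/U$ is nilpotent and lies in $J(A/U)$. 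As every maximal closed left ideal $M$ is open and contains some open two-sided ideal $U$ (Lemma \ref{lemmaWilson}), this gives $P\subseteq M$ for all such $M$, i.e.\ $P\subseteq(1)$. This replaces the entire idempotent-lifting apparatus with one application of linear compactness.
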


\begin{proof}

[$\ref{P1.IntAll}=\ref{P1.SetLeftInv}=\ref{P1.SetInv}=\ref{P1.NoGen}$] is classical  (for instance, see \cite[Lemma 4.1, Lemma 4.3]{Lam91} for [$\ref{P1.IntAll}=\ref{P1.SetLeftInv}=\ref{P1.SetInv}$] and \cite[Page 65]{BenKei} for [$\ref{P1.IntAll}=\ref{P1.NoGen}$]).

[$\ref{P1.IntLeft} = \ref{P1.IntAnn}$] is also completely standard, given that $A/I$ is a simple pseudocompact $A$-module whenever $I$ is a maximal closed ideal of $A$ and every pseudocompact simple module is isomorphic to one of this form.

The proof that [$\ref{P1.IntLeft} = \ref{P1.TopNoGen}$] is just like the proof that [$\ref{P1.IntAll} = \ref{P1.NoGen}$]: if $a$ is a topological non-generator and $M$ is a maximal closed left ideal, then $a\in M$ because otherwise $\{a\}\cup M$ generates $A$ while $M$ doesn't.  For the reverse inclusion, if the left ideal generated by the set $X$ is not $A$ then it's contained in a maximal closed left ideal $M$ by Corollary \ref{Corol closed ideal in max ideal}.  If $a$ is in $\ref{P1.IntLeft}$ then $a\in M$ and hence $\{a\}\cup X$ also doesn't generate $A$.

[$\ref{P1.IntLeft} = \ref{P1.IntBilat}$] follows by the equality of these sets for finite dimensional algebras.  Denote by $J(B)$ the intersection of the maximal closed left ideals of $B$ and by $r(B)$ the intersection of the maximal closed two sided ideals of $B$.  Given an open two sided ideal of $A$, $J(A/I) = r(A/I)$ by the result for finite dimensional algebras \cite[Corollary 3.1.8.]{DK1994}.  By the correspondence theorem, it follows that the intersection of the maximal closed left ideals containing $I$ is equal to the intersection of the maximal closed two sided ideals containing $I$.  Since $\bigcap_{I\lhd_O A}I = 0$, a standard compactness argument shows that every maximal open left ideal $L$ contains an open two sided ideal: We have $L^C$ is closed and $L^C\cap \bigcap_{I\lhd_O A}I = \varnothing$.  Hence by Lemma \ref{Lemma PC is LC} there is a finite set of open ideals $I_1,\hdots,I_n$ such that
$$L^C\cap I_1\cap \hdots\cap I_n = \varnothing$$
and the open ideal $I_1\cap \hdots \cap I_n$ is contained in $L$.  Thus
$$J(A) = \bigcap_{I\lhd_O A}\bigcap_{\hbox{max left }M\supseteq I}M
= \bigcap_{I\lhd_O A}\bigcap_{\hbox{max two-sided }M\supseteq I}M
= r(A).$$

[$\ref{P1.IntLeft}\subseteq \ref{P1.SetLeftInv}$] Let $x\in \ref{P1.IntLeft}$ and $y\in A$ be such that $1-yx$ is not left invertible. Thus $A(1-yx)\subsetneq A$. But $A(1-yx)$ is closed, being finitely generated as a left $A$-module \cite[Lemma 2.1]{Brumer} and so it is contained in some maximal closed left ideal $M$ of $A$ by Corollary \ref{Corol closed ideal in max ideal}.  Hence $1\in yx+M$, but $yx+M=M$, a contradiction. 

The inclusion [$\ref{P1.IntRight}\subseteq \ref{P1.SetLeftInv}$] is similar because $\ref{P1.SetInv}$ is left-right symmetric, so we can swap between left and right as we choose. 

The inclusion [$\ref{P1.IntAll}\subseteq \ref{P1.IntLeft}$] is immediate: a maximal closed left ideal is maximal as an abstract left ideal because an ideal containing an open ideal is open. By left-right symmetry, we obtain $[\ref{P1.IntAll}\subseteq \ref{P1.IntRight}]$ in the same way.  



[$\ref{P1.IntAnn} \subseteq \ref{P1.ProdSimples}$]  If $x\in \ref{P1.IntAnn}$ and $V$ is a closed submodule with $A/V$ a product of simple modules, then $x$ is in the kernel of the natural projection $A\to A/V$, so $x\in V$.

[$\ref{P1.ProdSimples} \subseteq \ref{P1.IntLeft}$] Let $J$ denote the intersection of the maximal closed left ideals of $A$ and denote by $\mathcal{M}$ the set of all maximal closed left ideals.  The natural projections $A/J \to A/M$ ($M\in \mathcal{M}$) yield a continuous map
$$\gamma:A/J \to \prod_{M\in \mathcal{M}}A/M,$$
whose kernel is $0$.  By Lemma \ref{lemma inclusion in a semisimple module splits}, this inclusion splits.  The product $X$ of one copy of each isomorphism class of simple $A$-module is finitely generated by Lemma \ref{corol prod of simples is cyclic}, so by Lemma \ref{prop exchange property},
$$\prod_{M\in \mathcal{M}}A/M = \gamma(A/J) \oplus \prod_{M\in \mathcal{N}}A/M$$ 
for some subset $\mathcal{N}$ of $\mathcal{M}$.  Thus $A/J$ is isomorphic to a product of simple modules, and hence $V\subseteq J$. 


[$\ref{P1.IntLeft}\subseteq \ref{P1.Pronilp}$]. Let $J(A)$ denote the intersection of the maximal closed left ideals of $A$ and let $I$ be an open left ideal.  Then $J(A)+I\subseteq J(A/I)$ and hence $J(A)^n\subseteq I$ for some $n$ by the nilpotence of the radical for finite dimensional algebras (see for instance \cite[Proposition 3.1.9]{DK1994}).  Thus $\bigcap_{n\in \N}J(A)^n\subseteq \bigcap_{I\lhd_O A} I = 0$.

[$\ref{P1.Pronilp}\subseteq \ref{P1.IntLeft}$]. Let $M$ be a closed maximal left ideal and $P$ a pronilpotent ideal in $A$. The module $A/M$ is simple, hence $P(A/M)$ equals $0$ because $P$ is pronilpotent. Therefore $P=PA\subseteq M$, so $P\subseteq \ref{P1.IntLeft}$. 
\end{proof}

Note that the equality of $\ref{P1.IntLeft}$ and $\ref{P1.IntAll}$ in the above proposition says that the topological Jacobson radical of the pseudocompact algebra is equal to the Jacobson radical of $A$ treated as an abstract algebra.  As there is no ambiguity, we will from now on refer to $J(A)$ simply as the Jacobson radical of $A$.  Item \ref{P1.IntBilat} is the definition of the Jacobson radical given in \cite[Page 444]{Brumer}.

The Jacobson radical $J(A)$ is well-behaved with respect to radicals of finite dimensional quotients of $A$, in the sense that $J(A)$ coincides with the inverse limits of $J(A/I)$ where $I$ runs over all open ideals in $A$. The latter implies that surjective homomorphisms between pseudocompact algebras map  radicals onto radicals. These two properties were proved, for instance, in \cite{IM17}. For the sake of completeness we state them here with complete proofs.

\begin{lemma}\label{radical limit of radicals}
Write $A = \invlim_{I} \{A/I\,,\,\alpha_{II'}\}$ as an inverse limit of finite dimensional quotient algebras $A/I$.  Then
$$J(A) = \invlim_I J(A/I).$$
\end{lemma}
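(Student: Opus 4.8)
The plan is to realize the right-hand side as an explicit subset of $A$ and then establish the equality $J(A) = \invlim_I J(A/I)$ by two inclusions, translating everything through the correspondence theorem on each finite dimensional quotient. Throughout I write $\pi_I : A \to A/I$ for the canonical projection.

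First I would check that $\{J(A/I)\}_I$ is an inverse subsystem of $\{A/I,\alpha_{II'}\}$: the transition maps are surjections of finite dimensional algebras, and such a map sends the radical into the radical, so it restricts to a map of radicals. By exactness of $\invlim$ in this category, the inclusions $J(A/I)\incl A/I$ then identify $\invlim_I J(A/I)$ with the closed subspace
$$\bigcap_I \pi_I^{-1}\big(J(A/I)\big) = \{a \in A : \pi_I(a) \in J(A/I) \text{ for all } I\} \subseteq \invlim_I A/I = A.$$
It therefore suffices to prove that this intersection is $J(A)$, which by Lemma \ref{lemmaWilson} is the intersection $\bigcap_M M$ of the maximal open left ideals $M$ of $A$.

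The bridge between the two sides is the observation that, for each open ideal $I$, the correspondence theorem identifies the maximal left ideals of $A/I$ with the maximal open left ideals of $A$ containing $I$ (an ideal containing an open ideal is open), so that $\pi_I^{-1}(J(A/I)) = \bigcap_{M\supseteq I} M$. The inclusion $J(A)\subseteq \invlim_I J(A/I)$ is then immediate: $J(A) = \bigcap_M M \subseteq \bigcap_{M\supseteq I} M = \pi_I^{-1}(J(A/I))$ for every $I$, so $\pi_I(J(A))\subseteq J(A/I)$ for all $I$.

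For the reverse inclusion I would take $a$ with $\pi_I(a)\in J(A/I)$ for every $I$ and an arbitrary maximal open left ideal $M$, and show $a\in M$. The one point needing care is that the index set consists of two-sided ideals, whereas $M$ is only one-sided; but, exactly as in the compactness argument in the proof of Proposition \ref{radicalChar} (via Lemma \ref{Lemma PC is LC}), $M$ contains an open two-sided ideal $I$. Then $M/I$ is a maximal left ideal of $A/I$, so $J(A/I)\subseteq M/I$ and hence $a\in \pi_I^{-1}(J(A/I))\subseteq M$. As $M$ was arbitrary this gives $a\in J(A)$, completing the argument. I expect this passage from one-sided to two-sided ideals to be the only genuine subtlety, and it is already handled by the compactness fact proved earlier.
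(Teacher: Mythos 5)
Your proposal is correct and takes essentially the same route as the paper's own proof: restrict the inverse system to the radicals, identify $\invlim_I J(A/I)$ inside $A$, and establish both inclusions via the correspondence theorem together with the fact (already used in the proof of Proposition \ref{radicalChar}) that every maximal open left ideal contains an open two-sided ideal, so the quotients $A/I$ with $I$ inside a given maximal open left ideal form a cofinal subsystem. The only cosmetic difference is that you prove $\invlim_I J(A/I)\subseteq J(A)$ directly by showing membership in an arbitrary maximal open left ideal, whereas the paper argues contrapositively starting from $x\notin J(A)$.
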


\begin{proof}
Since the quotients $\alpha_{II'}:A/I'\to A/I$ are surjective, it is immediate that $\alpha_{II'}(J(A/I'))\subseteq J(A/I)$.  Thus, the restriction of the inverse system of $A/I$ to their Jacobson radicals indeed yields an inverse system with inverse limit $\invlim J(A/I)\subseteq A$.  Since an element $x\in J(A)$ maps into each $J(A/I)$, it follows that $J(A)\subseteq \invlim J(A/I)$.  On the other hand, given $x\not\in J(A)$, there is some open maximal left ideal $M$ not containing $x$.  Working within the cofinal subsystem of $A/I$ with $I\subseteq M$, we see that $x+I\not\in J(A/I)$, and hence $x\not\in\invlim J(A/I)$, so that $\invlim J(A/I)\subseteq J(A)$.
\end{proof}

\begin{corol}\label{radical surjective}
Let $A,B$ be pseudocompact algebras and let $\alpha:A\to B$ be a continuous surjective algebra homomorphism.  Then $\alpha(J(A)) = J(B)$. 
\end{corol}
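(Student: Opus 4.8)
The plan is to prove the two inclusions separately, using the invertibility description of the radical for the easy direction and a linear compactness argument for the hard one.

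First I would dispose of $\alpha(J(A)) \subseteq J(B)$ using Proposition \ref{radicalChar}\ref{P1.SetInv}. If $x \in J(A)$, then $1-yxz$ is invertible in $A$ for all $y,z\in A$. Given $u,v\in B$, surjectivity of $\alpha$ lets us write $u=\alpha(y)$ and $v=\alpha(z)$, and since a unital homomorphism carries units to units, $1-u\,\alpha(x)\,v = \alpha(1-yxz)$ is invertible in $B$. As $u,v$ range over all of $B$, the characterization \ref{P1.SetInv} applied to $B$ gives $\alpha(x)\in J(B)$.

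For the reverse inclusion I would fix $b\in J(B)$ and show that the fibre $\alpha^{-1}(b)$ meets $J(A)$. Writing $A=\invlim_I A/I$ over the open (two-sided) ideals $I$ of $A$, with projections $\pi_I:A\to A/I$, Lemma \ref{radical limit of radicals} gives $J(A)=\bigcap_I \pi_I^{-1}(J(A/I))$, an intersection of closed subspaces. Since $\alpha^{-1}(b)$ is a nonempty closed affine subspace of the pseudocompact space $A$, by Lemma \ref{Lemma PC is LC} it suffices to check that every finite subfamily of $\{\alpha^{-1}(b)\}\cup\{\pi_I^{-1}(J(A/I))\}_I$ has nonempty intersection. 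Because the transition maps $A/I'\to A/I$ (for $I'\subseteq I$) carry radical into radical, we have $\pi_{I'}^{-1}(J(A/I'))\subseteq \pi_I^{-1}(J(A/I))$, so any such finite intersection contains $\alpha^{-1}(b)\cap \pi_I^{-1}(J(A/I))$ with $I$ the (open) intersection of the finitely many ideals involved; the subfamilies omitting $\alpha^{-1}(b)$ are harmless as they contain $0$. Hence everything reduces to a single statement: for each open ideal $I$ of $A$ there is $a\in A$ with $\alpha(a)=b$ and $\pi_I(a)\in J(A/I)$.

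To produce such an $a$, I would set $J'=\alpha(I)$, a two-sided ideal of $B$ for which $\alpha$ induces a surjection $\bar\alpha:A/I\to B/J'$ of finite-dimensional algebras. A surjection of finite-dimensional (indeed Artinian) algebras carries the radical onto the radical, so $\bar\alpha(J(A/I))=J(B/J')$; moreover $b+J'\in J(B/J')$, since $b\in J(B)$ forces $1-ybz$ invertible and hence (passing to the quotient, again by \ref{P1.SetInv}) $b+J'\in J(B/J')$. Thus there is $\bar a\in J(A/I)$ with $\bar\alpha(\bar a)=b+J'$; lifting $\bar a$ to $a_0\in A$ with $\pi_I(a_0)=\bar a$ gives $\alpha(a_0)-b\in J'=\alpha(I)$, say $\alpha(a_0)-b=\alpha(i)$ with $i\in I$, and then $a:=a_0-i$ satisfies $\alpha(a)=b$ and $\pi_I(a)=\pi_I(a_0)=\bar a\in J(A/I)$. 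I expect the main obstacle to be exactly this lifting: one must choose a preimage of $b$ that lies in the radical and in the prescribed fibre simultaneously, and it is the linear compactness of $A$ (Lemma \ref{Lemma PC is LC}), together with the reduction to finite-dimensional quotients where radical-onto-radical is classical, that makes this possible. Combining the two inclusions yields $\alpha(J(A))=J(B)$.
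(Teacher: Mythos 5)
Your proof is correct, and for the essential inclusion $J(B)\subseteq\alpha(J(A))$ it takes a genuinely different route from the paper's. The paper bootstraps in three stages: both algebras finite dimensional (where the inclusion is obtained by comparing module radicals, $\tn{Rad}_BB=\tn{Rad}_AB$), then $A$ general with $B$ finite dimensional, then both general; at the second and third stages surjectivity is transported to the limit by Lemma \ref{radical limit of radicals} together with the exactness of $\invlim$ in the category of pseudocompact modules, a categorical fact imported from Gabriel's thesis. You instead fix $b\in J(B)$ and manufacture a preimage inside $J(A)$ in a single pass: you apply linear compactness (Lemma \ref{Lemma PC is LC}) to the family of closed affine subspaces $\{\alpha^{-1}(b)\}\cup\{\pi_I^{-1}(J(A/I))\}_I$, reduce the finite-intersection condition by directedness to a single open ideal $I$, and solve the resulting lifting problem by hand in the finite-dimensional surjection $A/I \onto B/\alpha(I)$. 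Both arguments ultimately rest on the same classical fact that a surjection of finite-dimensional algebras carries radical onto radical (the paper proves this via module radicals; you quote it), and your compactness argument is in effect an unpacking of why $\invlim$ is exact for pseudocompact modules, since the standard proof of that exactness is precisely such a finite-intersection argument. What your route buys is self-containedness -- Lemma \ref{Lemma PC is LC} replaces the appeal to exactness of inverse limits -- and it also makes the ``immediate'' inclusion explicit via Proposition \ref{radicalChar}\ref{P1.SetInv}; what the paper's route buys is brevity and a reusable intermediate statement (the case of discrete $B$), at the price of leaning on the abelian-category machinery.
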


\begin{proof}
That $\alpha(J(A))\subseteq J(B)$ is immediate.  To prove the other inclusion, we begin by supposing that $A, B$ are finite dimensional.  Recall \cite[Proposition 3.5]{ARS} that the radical $\tn{Rad}_B(U)$ of a finitely generated $B$-module $U$ is given by $J(B)U$ (and similarly for $A$).  We can treat $B$ either as a $B$-module, or as an $A$-module via $\alpha$, and since $\alpha$ is surjective it follows that $\tn{Rad}_BB = \tn{Rad}_AB$.  Thus
$$
	J(B) = J(B)B = \tn{Rad}_B B = \tn{Rad}_A B = J(A)\cdot B = \alpha(J(A)) B \subseteq \alpha(J(A)). 
$$

Now let $A$ be general and $B$ finite dimensional.  Since $B$ is discrete and $\alpha$ is continuous, we can consider a cofinal subset of open ideals of $A$ contained in the kernel of $\alpha$.  By factorizing $\alpha$ through these quotients we obtain a map of inverse systems $\{\alpha_I:A/I\to B\,|\,I\lhd_O A, I\leqslant \tn{Ker}(\alpha)\}$.  Restricting and corestricting this inverse system to the Jacobson radicals, we obtain a surjective map of inverse systems $\alpha_I:J(A/I)\onto J(B)$, whose inverse limit is $\alpha:J(A)\to J(B)$ by Lemma \ref{radical limit of radicals}.  It is onto by the exactness of $\invlim$.

Finally, allowing both $A$ and $B = \invlim\{B/K,\beta_{KK'}\}$ to be general, the obvious composition $\beta_K\alpha:A\to B\to B/K$ is a surjective map onto the finite dimensional algebra $B/K$ and hence it restricts to a surjection $J(A)\onto J(B/K)$ for each $K$.  We obtain in this way a surjective map of inverse systems and the result follows from the exactness of $\invlim$.
\end{proof}


\subsection{Topologically semisimple algebras} 

Recall that a (classically) semisimple algebra is an associative algebra for which every left ideal has a complement.
By the Wedderburn--Artin theorem \cite[Theorem 3.5]{Lam91}, a semisimple $k$-algebra is a direct product of a finite number of matrix algebras over $k$-division algebras.

\begin{defn}
Let $A$ be a topological algebra. We say that $A$ is \textit{(left) topologically semisimple} if for any closed left ideal $I$ in $A$ there exists a closed left ideal $L$ such that $I\oplus L=A$. 
\end{defn}

\begin{ex}
Let $X$ be an infinite set and consider the algebra $A=\prod_{i\in X} k$. Then $A$ is not (classically) semisimple: the (non-closed) ideal $I=\bigoplus_{i\in X} k$ has no complement, because every non-zero ideal of $A$ intersects $I$. But it follows from Proposition \ref{ssCharacterization} that $A$ \emph{is} topologically semisimple.
\end{ex}

The following is a Wedderburn-Artin Theorem for pseudocompact algebras.  Several of the equivalences here are presented for a wider class of algebras in \cite[Theorem 3.10]{IZR06}.

\begin{prop} \label{ssCharacterization}
Let $A$ be a pseudocompact algebra. The following conditions are equivalent.
\begin{enumerate}[label=\textnormal{(\arabic*)}]
\item $A$ is (left) topologically semisimple; \label{P2.topSS}
\item $J(A)=0$; \label{P2.J=0}
\item The free left $A$-module $A$ is isomorphic to a product of simple modules; \label{P2.prodSimples}
\item Every left pseudocompact $A$-module is projective; \label{P2.everyProj}
\item Every left pseudocompact $A$-module is injective; \label{P2.everyInj}
\item $A$ is an inverse limit of finite-dimensional semisimple algebras. \label{P2.invLimSS}
\item $A$ is isomorphic to a product of full matrix algebras $M_{n_i}(D_i)$, where the $n_i$ are positive integers and the $D_i$ are finite dimensional $k$-division algebras.  This product is unique up to permutation of the terms and isomorphism of the division algebras $D_i$.
\label{P2.prodFullM}
\end{enumerate}
\end{prop}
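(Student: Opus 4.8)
The plan is to treat conditions \ref{P2.J=0} and \ref{P2.prodSimples} as the hub. The equivalence \ref{P2.J=0}~$\Leftrightarrow$~\ref{P2.prodSimples} is immediate from Proposition \ref{radicalChar}: item \ref{P1.ProdSimples} there characterizes $J(A)$ as the smallest closed submodule $V$ with $A/V$ a product of simple modules, so $J(A)=0$ precisely when $A$ itself is such a product. From \ref{P2.prodSimples} I would derive \ref{P2.everyProj}, \ref{P2.everyInj} and \ref{P2.topSS} together, via the observation that under \ref{P2.prodSimples} \emph{every} pseudocompact module is a product of simple modules: recalling that the category of pseudocompact modules has enough projectives, with the pseudocompact free modules being products of copies of $A$, any module is a continuous quotient of a product of copies of $A$ (itself a product of simples), and the proof of Lemma \ref{lemma inclusion in a semisimple module splits} shows that the complement of a closed submodule of a product of simples is again a product of simples. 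Consequently every short exact sequence of pseudocompact modules splits, since its kernel is a closed submodule of a product of simples and hence a continuous summand by Lemma \ref{lemma inclusion in a semisimple module splits}; this yields \ref{P2.everyProj} and \ref{P2.everyInj} at once, and \ref{P2.topSS} on restricting to $A$ itself.

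To close this block of five conditions I would prove \ref{P2.topSS}~$\Rightarrow$~\ref{P2.J=0} together with \ref{P2.everyProj}, \ref{P2.everyInj}~$\Rightarrow$~\ref{P2.topSS}. The latter two are formal: if every module is projective then $A\onto A/I$ splits for each closed left ideal $I$, while if every module is injective then $I\incl A$ splits; either way $I$ is a continuous summand. For \ref{P2.topSS}~$\Rightarrow$~\ref{P2.J=0}, write $A = J(A)\oplus L$ with $L$ a closed left ideal and $1 = e+f$, $e\in J(A)$, $f\in L$. Since $J(A)$ is two-sided, for $a\in J(A)$ the element $af$ lies in $J(A)\cap L = 0$, so $a = ae$; taking $a=e$ gives $e = e^2$. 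By item \ref{P1.SetLeftInv} of Proposition \ref{radicalChar}, $1-e$ is left invertible, and multiplying a left-inverse relation $u(1-e)=1$ on the right by $e$ forces $e=0$; hence $a = ae = 0$ for all $a\in J(A)$, i.e.\ $J(A)=0$.

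The heart of the argument is the equivalence \ref{P2.prodSimples}~$\Leftrightarrow$~\ref{P2.prodFullM}, an Artin--Wedderburn theorem in the topological setting. The implication \ref{P2.prodFullM}~$\Rightarrow$~\ref{P2.prodSimples} is easy, as each $M_{n_i}(D_i)$ is, as a left module, its own simple module raised to the power $n_i$. For \ref{P2.prodSimples}~$\Rightarrow$~\ref{P2.prodFullM} I would group the factors of $A\iso\prod S$ by isomorphism type as $A\iso\prod_{[S]}S^{\mu_S}$ and compute the endomorphism algebra: since $\End_A({}_AA)\iso A^{op}$ and, by Schur's Lemma together with a continuity argument, continuous endomorphisms of $\prod_{[S]}S^{\mu_S}$ respect the isotypic decomposition, one gets $A^{op}\iso\prod_{[S]}\End_A(S^{\mu_S})$. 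The crucial finiteness point is that each multiplicity $\mu_S$ is finite: evaluation at $1$ gives $\Hom_A(A,S)\iso S$, which is finite dimensional, while continuity forces $\Hom_A(\prod_{[S']}(S')^{\mu_{S'}},S)$ to be $\mu_S$ copies of the division algebra $D_S=\End_A(S)$, so that $\mu_S\dim_k D_S = \dim_k S < \infty$. Hence $\End_A(S^{\mu_S})\iso M_{\mu_S}(D_S)$ and $A\iso\prod_{[S]}M_{\mu_S}(D_S^{op})$ with each $D_S^{op}$ a finite dimensional division algebra. Uniqueness is then intrinsic: the factors biject with the isomorphism classes of simple pseudocompact modules, with $D_S^{op}\iso\End_A(S)$ and $\mu_S = \dim_k S/\dim_k D_S$ recovered from $S$; alternatively it follows from the essential uniqueness in Proposition \ref{prop exchange property}.

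Finally I would link in \ref{P2.invLimSS}. The implication \ref{P2.prodFullM}~$\Rightarrow$~\ref{P2.invLimSS} is immediate from Example \ref{Ex.products}, writing the product of the (finite dimensional semisimple) matrix algebras as the inverse limit of its finite sub-products. For \ref{P2.invLimSS}~$\Rightarrow$~\ref{P2.J=0}, write $A=\invlim A_\lambda$ with each $A_\lambda$ finite dimensional semisimple; after the standard reduction to a surjective inverse system, the structure map $A\onto\bar A_\lambda$ has image the eventual image of the $A_\mu$, which, being a quotient of some semisimple $A_\mu$, is itself semisimple, so Corollary \ref{radical surjective} gives $J(\bar A_\lambda)=0$ and Lemma \ref{radical limit of radicals} yields $J(A)=\invlim J(\bar A_\lambda)=0$. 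I expect the main obstacle to be the topological endomorphism computation in \ref{P2.prodSimples}~$\Rightarrow$~\ref{P2.prodFullM}: one must verify carefully that continuity makes $\End$ of the (possibly enormous) product of isotypic components decompose as the product of the matrix algebras, and that every multiplicity is finite. A secondary subtlety lies in the reduction to a surjective system in \ref{P2.invLimSS}~$\Rightarrow$~\ref{P2.J=0}, where it is essential that the relevant algebras arise as genuine \emph{quotients} of semisimple algebras (hence are semisimple), rather than as mere subalgebras of them, for which semisimplicity can fail.
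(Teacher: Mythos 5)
Your proposal is correct, but it reaches the Wedderburn decomposition by a genuinely different route than the paper. The paper's cycle runs [\ref{P2.J=0}$\Rightarrow$\ref{P2.invLimSS}$\Rightarrow$\ref{P2.prodFullM}$\Rightarrow$\ref{P2.J=0}]: it writes $A$ as an inverse limit of semisimple finite dimensional \emph{quotients}, decomposes $A$ via Gabriel's complete set of centrally primitive idempotents, and identifies each factor $Ae$ as a matrix algebra by a direct-limit argument on the idempotent sets of the finite quotients (citing \cite{MSbrauer}). You instead prove [\ref{P2.prodSimples}$\Leftrightarrow$\ref{P2.prodFullM}] directly by the classical endomorphism-ring argument: group the simple factors into isotypic components, prove each multiplicity $\mu_S$ is finite by comparing $\Hom_A(A,S)\iso S$ with the continuity fact that a map from a product to a discrete finite dimensional module has open kernel (hence kills all but finitely many factors and is governed by Schur's Lemma), and conclude $A^{\tn{op}}\iso \End_A(A)\iso \prod M_{\mu_S}(D_S)$. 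Your route buys a transparent proof of the uniqueness clause in \ref{P2.prodFullM} (factors biject with isomorphism classes of simples, with $D_S$ and $\mu_S$ intrinsic to $S$) --- a clause the paper's proof never explicitly addresses --- and the stronger byproduct that \emph{every} pseudocompact module over such an algebra is a product of simples, which yields \ref{P2.everyProj}, \ref{P2.everyInj} and \ref{P2.topSS} simultaneously; the paper instead gets \ref{P2.everyProj} by reducing to finite dimensional discrete modules and invoking Brumer's theorem that inverse limits of projectives are projective. The costs of your route: you must invoke the existence of enough projectives with free pseudocompact modules of the form $\prod_X A$, a standard fact (Brumer, Gabriel) that the paper never states; you must actually carry out the flagged $\Hom$/$\End$ computations over infinite products (they do go through, precisely because of the open-kernel observation); and you need [\ref{P2.invLimSS}$\Rightarrow$\ref{P2.J=0}] for arbitrary, possibly non-surjective inverse systems, which you correctly handle via eventual images --- quotients, not mere subalgebras, of the semisimple terms --- a subtlety that the paper's own proof of [\ref{P2.invLimSS}$\Rightarrow$\ref{P2.prodFullM}] quietly sidesteps by writing $A$ as a limit of semisimple quotients. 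Your argument for [\ref{P2.topSS}$\Rightarrow$\ref{P2.J=0}] (producing an idempotent $e\in J(A)$ and killing it with left invertibility of $1-e$) also differs from the paper's (a maximal closed left ideal containing the complement of $J(A)$), but both are sound.
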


\begin{proof}

[\ref{P2.topSS}$\Rightarrow$\ref{P2.J=0}] As $A$ is topologically semisimple and $J(A)$ is closed, $A=J(A)\oplus I$, for some closed left ideal $I$.  If $J(A)\neq 0$ then $I\neq A$ so $I$ is contained in a closed maximal left ideal $M$.  But $M$ does not contain $J(A)$, since otherwise $A = J(A)+I\subseteq M$, a contradiction. 

[\ref{P2.J=0}$\Rightarrow$\ref{P2.prodSimples}] This follows by Proposition \ref{radicalChar} because $A = A/J(A)$ is a product of simple $A$-modules. 

[\ref{P2.prodSimples}$\Rightarrow$\ref{P2.everyProj}]
Any left pseudocompact $A$-module $U$ is an inverse limit of discrete finite dimensional $A$ modules. An inverse limit of projective $A$-modules is a projective $A$-module by \cite[Corollary 3.3]{Brumer}, so it is enough to show that any discrete finite dimensional $A$-module is projective.  Given such a $U$, let $\gamma: A^n\onto U$ be a continuous surjection.  The kernel of $\gamma$ is open in $A^n = \prod_{i\in X}S_i$ ($S_i$ simple), and so $\gamma$ factors through some product $T$ of finitely many $S_i$.  The map $A^n\to T$ splits and the map $T\to U$ splits by the finite dimensional version of this result.  Hence $\gamma$ splits, showing that $U$ is projective.



[\ref{P2.everyProj}$\Rightarrow$\ref{P2.everyInj}] Given a module $U$ and an injective map of modules $\gamma:U\to V$, the short exact sequence $U\to V \to V/\gamma(U)$ splits since the third module is projective.  So $U$ is injective.

[\ref{P2.everyInj}$\Rightarrow$\ref{P2.topSS}] Suppose that $I$ is a closed left ideal in $A$. The short exact sequence $I\to A \to A/I$ splits because $I$ is injective. Hence $A\iso I\oplus A/I$ and $A$ is (left) topologically semisimple.


[\ref{P2.J=0}$\Rightarrow$\ref{P2.invLimSS}]. Suppose that $J(A)=0$. 
Write $A=\invlim \{A_i, \varphi_{ij}\}$ with each $A_i$ a finite-dimensional algebra and each $\varphi_{ij}$ a surjective algebra map.  By Corollary \ref{radical surjective} we have $J(A_i)=\varphi_i(J(A))=0$, hence each $A_i$ is semisimple.

[\ref{P2.invLimSS}$\Rightarrow$\ref{P2.prodFullM}] Denote by $C$ the complete set of centrally primitive central idempotents of $A$.  By \cite[IV\,\S3\,Corollaries 1, 2]{Gabriel_thesis} we have $A = \prod_{e\in C} Ae$ as an algebra.  We must check that each $Ae$ is a matrix algebra.  Write $A = \invlim\{A_i, \varphi_{ij}\}$ as an inverse limit of finite dimensional semisimple quotient algebras of $A$.  
Denoting by $C_i$ the corresponding complete set for $A_i$, we have that the $C_i$ form a direct system via the maps $\gamma_{ij} : C_i\to C_j$ sending $f\in C_i$ to the unique element $c$ of $C_j$ such that $\varphi_{ij}(c)f\neq 0$.  Furthermore, $C = \dirlim \{C_i, \gamma_{ij}\}$ by \cite[Proposition 6.4]{MSbrauer}, whose proof goes through for arbitrary fields.  The maps $\gamma_{ij}$ are injective because, as the $A_i$ are semisimple, the image of an irreducible factor of $A_j$ under $\varphi_{ij}$ is either irreducible or $0$.  Hence the maps $\gamma_i : C_i \to C$ are injective.  It follows that $A_i\varphi_i(e)$ is $0$ or a matrix algebra.  Consider the cofinal inverse system of $A_i$ such that $A_i\varphi_i(e)\neq 0$.  Within this cofinal inverse system the maps $\varphi_{ij}|_{A_j\varphi_{j}(e)}$ are injective, because the kernel of such a map is a proper ideal of a matrix algebra, so must be $0$.  It follows that $\varphi_i|_{Ae}$ is injective, so that $Ae$ is isomorphic to a unital subalgebra of $M_n(D)$, where $D$ is a finite $k$-division algebra.  Hence $Ae$ has the required form.  This implication also follows from the more general \cite[Theorem 29.7]{Warner}.


[\ref{P2.prodFullM}$\Rightarrow$\ref{P2.J=0}] For any $t\in I$ the closed ideal $M_t=\prod_{i\in I, i\neq t} M_{n_i}(D_i)$ is maximal in the algebra $A = \prod_{i\in I} M_{n_i}(D_i)$, so that $J(A)\subseteq \bigcap_{t\in I} M_t=0$.


\end{proof}

\begin{rem}\label{rem.locality}
By \cite[IV. §3, Corollary 3]{Gabriel_thesis}, idempotents can be lifted modulo $J(A)$ in a pseudocompact algebra $A$: that is, for any idempotent $f$ of $A/J(A)$, there is an idempotent $e\in A$ such that $e + J(A) = f$.  Since $A/J(A)$ is a product of matrix algebras by the above characterization, every non-zero ideal of $A/J(A)$ contains a non-zero idempotent, and hence by \cite[Proposition 1.4]{Nich75}, every ideal of $A$ not contained in $J(A)$ contains a non-zero idempotent.  It now follows from \cite[Proposition 2.1]{Nich75} that when $e$ is a primitive idempotent of a pseudocompact algebra, the algebra $eAe$ is local (see \cite[Lemma 4.2]{JohnRicardoBrauersFirstTheorem} for a different proof of this fact). 
\end{rem}

\begin{ex}
A commutative artinian ring is a direct product of local rings (e.g. \cite[Theorem 8.7]{AM69}), and the same is true for pseudocompact algebras. Indeed, let $A$ be a commutative pseudocompact algebra.  
By \cite[IV. §3, Corollaries  1,2]{Gabriel_thesis}, there is a set of primitive idempotents $\{e_i\,|\,i\in I\}$ in $A$ such that
$$A = \prod_{i\in I}Ae_i.$$
By Remark \ref{rem.locality}, each $Ae_i = e_iAe_i$ is local. 
\end{ex}

\begin{ex}
From Part \ref{P2.J=0} of Proposition \ref{ssCharacterization} and Item \ref{Examples JacobsonRadical profinite groups} of Example \ref{examples Jacobson radicals}, one obtains a presumably well-known version of Maschke's Theorem for profinite groups: if $G$ is a profinite group and $k$ is a field whose characteristic does not divide the order of any continuous finite quotient group of $G$, then $k\db{G}$ is topologically semisimple.
\end{ex}

\section{Separable pseudocompact algebras and the Wedderburn-Malcev Theorem}\label{Section Separable and WM}

Separable algebras are a particularly well-behaved class of semisimple algebras whose definition becomes important when the field $k$ is not algebraically closed, and which have a literature to themselves (see e.g. \cite[Chapter 10]{Pierce} or \cite{Ford17}).
Having the notion of semisimple pseudocompact algebra, we are in a strong position to define and characterize separable pseudocompact algebras in a precise way.  

Having done so, we will prove a Wedderburn-Malcev Theorem for pseudocompact algebras: if $A$ is a pseudocompact algebra for which $A/J(A)$ is separable, then $A$ contains a closed subalgebra $S$ such that $A=S\oplus J(A)$ (the ``Wedderburn part''),  
and any two such subalgebras are conjugate (the ``Malcev part'').


\subsection{Separable pseudocompact algebras} 

In what follows, by $\ctens_k$ we denote the \textit{completed tensor product} over $k$, which is defined by a universal property analogous to that of the abstract tensor product in the category of pseudocompact modules (see, \cite[Section 2]{Brumer} for the details).  Completed tensor products have similar properties to abstract tensor products,
but if $V, W$ are pseudocompact $k$-vector spaces, then $V\otimes_k W = V\ctens_k W$ if, and only if, at least one of $V,W$ is finite dimensional \cite[Proposition 2.2]{MSZ}.

\begin{defn}\label{defn.separability}
A pseudocompact algebra $A$ (over a field $k$) is called \textit{separable} if 
$A\ct_k E$ is a semisimple algebra over $E$ for every finite extension field $E$ of $k$.
\end{defn}

\begin{rem}
A finite dimensional algebra $A$ is usually said to be separable if $A\otimes_k E$ is semisimple for every extension field $E$ of $k$.  A finite dimensional algebra that is separable in this sense is clearly separable in our sense, and the converse is also true.  Indeed, it is enough 
to check this for finite dimensional simple algebras.  By the usual characterization of separable algebras (see for example \cite[Theorem 4.5.7]{Ford17}), if $M_n(D)$ is not separable then $D$ is a division algebra whose centre $Z$ is a non-separable finite field extension of $k$ (whose characteristic must therefore be a prime number $p$).   We will check that $Z\otimes_k M_n(D)$ is not semisimple.  Let $S:k$ be the separable closure of the extension $Z:k$. Hence $Z:S$ is a (non-trivial) purely inseparable extension, so for any $\alpha\in Z\setminus S$ there is $n\in \mathbb N$ such that $\alpha^{p^n}\in S$ \cite[Theorem V.6.4]{Hungerford}.  One checks that $x = \alpha \otimes 1-1\otimes \alpha$ is nilpotent ($x^{p^n}=0$) in $Z\otimes_S Z$. This implies (by the Wedderburn-Artin theorem) that $J(Z\otimes_S Z)\neq 0$. But now the surjection $Z\otimes_k Z \twoheadrightarrow Z\otimes_S Z$ implies by Corollary \ref{radical surjective} that $J(Z\otimes_k Z)\neq 0$. 
An element $0\neq y \in J(Z\otimes_k Z)$ has torsion, and hence the element $y\otimes 1 \in (Z\otimes_k Z)\otimes_Z D\iso Z\otimes_k D$ has torsion, so that $J(Z\otimes_k D)\neq 0$.  Finally 
$$
J(Z\otimes_k M_n(D))=J(M_n(Z\otimes_k D))=M_n(J(Z\otimes_k D))\neq 0.
$$
\end{rem}





We generalize some further concepts from finite dimensional to pseudocompact algebras. A \emph{pseudocompact derivation} of the pseudocompact algebra $A$ is a pseudocompact $A$-bimodule $T$ together with a continuous linear map $d:A\to T$ having the property that
$$d(ab) = ad(b) + d(a)b.$$
An \emph{inner pseudocompact derivation of $A$} is a pseudocompact derivation of the form 
$a\mapsto ua - au$, for some fixed $u\in T$ (such a map is easily checked to be a derivation).  Consider $A\ctens_k A$ as an $A$-bimodule as follows:
$$a\cdot(b\ctens c) := ab\ctens c\,,\quad (b\ctens c)\cdot a := b\ctens ca.$$
Observe that $A\ctens_k A$ is the free $A$-bimodule of rank $1$ (freely generated by $1\ctens 1$).  This done, we can treat the multiplication of $A$ as a continuous $A$-bimodule homomorphism $m:A\ctens_k A \to A$ given on pure tensors as $b\ctens c\mapsto bc$.  Finally, a \emph{separability idempotent for $A$} (if it exists) is an element $p\in A\ctens_k A$ with the following properties:
$$m(p) = 1\,,\quad ap = pa \quad  \forall a\in A.$$

\begin{theorem}\label{Theorem separable characterization}
The following statements concerning a pseudocompact algebra $A$ over a field $k$ are equivalent:
\begin{enumerate}[label=\textnormal{(\arabic*)}]
\item $A$ is a separable algebra; \label{T1.sepAlg}
\item $A\iso \prod M_{n_i}(\Delta_i)$ with each $\Delta_i$ a finite $k$-division algebra whose center is a separable field extension of $k$; \label{T1.prodSepMatrices}
\item $A$ is an inverse limit of separable finite-dimensional algebras; \label{T1.invLimSep}
\item $A$ is projective as an $A$-bimodule; \label{T1.AisProj}
\item the multiplication map $m: A\ctens_k A\to A$ splits as a homomorphism of $A$-bimodules;  \label{T1.mSplits}
\item $A$ has a separability idempotent $p$; \label{T1.sepIdemp}
\item every generalized pseudocompact derivation $d:A\to T$ is inner; \label{T1.derInner}
\item The universal derivation $f:A\to \tn{Ker}(m)$ defined by $a\mapsto 1\otimes a-a\otimes 1$ is inner.
\label{T1.univInner}
\end{enumerate}
\end{theorem}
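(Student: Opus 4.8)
The plan is to split the eight conditions into a \emph{structural} cluster \ref{T1.sepAlg}, \ref{T1.prodSepMatrices}, \ref{T1.invLimSep} and a \emph{homological} cluster \ref{T1.AisProj}--\ref{T1.univInner}, to prove the equivalences internal to each cluster, and then to join the two clusters with the two bridging implications \ref{T1.invLimSep}$\Rightarrow$\ref{T1.sepIdemp} and \ref{T1.sepIdemp}$\Rightarrow$\ref{T1.sepAlg}.

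For the structural cluster I would lean on the Wedderburn--Artin characterization of Proposition \ref{ssCharacterization}. Taking $E=k$ shows that a separable $A$ is semisimple, hence a product $\prod M_{n_i}(D_i)$. For \ref{T1.sepAlg}$\Leftrightarrow$\ref{T1.invLimSep} I would write $A=\invlim A/I$ and use that, since $E$ is finite dimensional, the completed tensor product $A\ctens_k E$ is an ordinary tensor product commuting with the inverse limit, so $A\ctens_k E=\invlim(A/I\otimes_k E)$; semisimplicity of $A\ctens_k E$ then descends to each quotient $A/I\otimes_k E$ by Corollary \ref{radical surjective}, making each $A/I$ separable, while conversely an inverse limit of separable finite-dimensional algebras tensors up to an inverse limit of semisimple algebras and so is separable again by Proposition \ref{ssCharacterization}. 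For \ref{T1.prodSepMatrices} I would distribute $\otimes_k E$ over the product $\prod M_{n_i}(D_i)$ and reduce, via Corollary \ref{radical surjective} applied to the coordinate projections, to the classical finite-dimensional fact that $M_n(D)$ is separable exactly when the centre of $D$ is a separable field extension of $k$ --- precisely the equivalence recorded in the Remark following Definition \ref{defn.separability}.

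The homological cluster is formal and follows the classical pattern, the only extra care being bookkeeping for the completed tensor product. A bimodule splitting $\sigma$ of $m$ is the same datum as the element $p=\sigma(1)$, and since $A\ctens_k A$ is free of rank one on $1\ctens 1$ the bimodule condition forces $ap=pa$ and $m(p)=1$, giving \ref{T1.mSplits}$\Leftrightarrow$\ref{T1.sepIdemp}; splitting $A$ off the free bimodule $A\ctens_k A$ gives \ref{T1.mSplits}$\Leftrightarrow$\ref{T1.AisProj}. For \ref{T1.sepIdemp}$\Rightarrow$\ref{T1.derInner} one applies the continuous map $a\ctens b\mapsto a\,d(b)$ to $p$ to produce the element witnessing innerness; \ref{T1.derInner}$\Rightarrow$\ref{T1.univInner} is the special case $T=\Ker(m)$; and \ref{T1.univInner}$\Rightarrow$\ref{T1.sepIdemp} is obtained by setting $p=1\ctens 1-u$ for the $u$ exhibiting $f$ as inner, whereupon $m(p)=1$ and $ap-pa=0$ fall out directly.

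The crux is the bridge \ref{T1.invLimSep}$\Rightarrow$\ref{T1.sepIdemp}, where the non-uniqueness of separability idempotents blocks a naive termwise limit. Writing $A=\invlim A_i$ with each $A_i$ separable, each set $P_i\subseteq A_i\otimes_k A_i$ of separability idempotents is a nonempty closed affine subspace, and the algebra surjections induce maps $P_j\to P_i$; pulling the $P_i$ back to closed affine subspaces $\tilde P_i$ of the linearly compact space $A\ctens_k A=\invlim(A_i\otimes_k A_i)$, I would verify that the $\tilde P_i$ enjoy the finite intersection property and invoke linear compactness (Lemma \ref{Lemma PC is LC}) to produce a point $p$ in their intersection, which is then a separability idempotent for $A$. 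Finally \ref{T1.sepIdemp}$\Rightarrow$\ref{T1.sepAlg} follows because a separability idempotent base-changes to one for $A\ctens_k E$ over $E$, and any algebra carrying a separability idempotent is semisimple (every short exact sequence of pseudocompact modules is $k$-split, hence split by the idempotent, so every module is projective and Proposition \ref{ssCharacterization} applies). I expect this linear-compactness step to be the main obstacle, and it is exactly the point at which the pseudocompact theory recovers its finite-dimensional shadow.
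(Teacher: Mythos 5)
Your proposal is correct, but it closes the cycle of equivalences along genuinely different bridges than the paper does, and the trade-offs are worth recording. The paper proves \ref{T1.sepAlg}$\Rightarrow$\ref{T1.prodSepMatrices}$\Rightarrow$\ref{T1.invLimSep}$\Rightarrow$\ref{T1.sepAlg}, enters the homological cluster via \ref{T1.invLimSep}$\Rightarrow$\ref{T1.AisProj} by citing Brumer's theorem that an inverse limit of projective pseudocompact modules is projective (applied to $A$ as a left $A\ctens_k A^{\tn{op}}$-module), and returns via \ref{T1.sepIdemp}$\Rightarrow$\ref{T1.invLimSep} by simply pushing the separability idempotent forward along $A\ctens_k A\to A/I\otimes_k A/I$ to obtain one for each finite-dimensional quotient; within the structural cluster you also argue \ref{T1.sepAlg}$\Leftrightarrow$\ref{T1.invLimSep} directly via Corollary \ref{radical surjective} rather than through \ref{T1.prodSepMatrices}, a minor variation. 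Your replacement bridge \ref{T1.invLimSep}$\Rightarrow$\ref{T1.sepIdemp} via linear compactness is sound: the sets $P_i$ of separability idempotents of the $A_i$ are nonempty closed affine subspaces, algebra surjections carry separability idempotents to separability idempotents (the same computation the paper uses for its own \ref{T1.sepIdemp}$\Rightarrow$\ref{T1.invLimSep}), so after passing to a common upper bound in the directed set the pullbacks of the $P_i$ have the finite intersection property, Lemma \ref{Lemma PC is LC} yields a common point, and since the projections separate the points of $A\ctens_k A$ that point is a separability idempotent for $A$. This is more elementary and self-contained than the paper's appeal to Brumer's projectivity theorem, which is a genuine gain. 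The cost appears on the return trip: your \ref{T1.sepIdemp}$\Rightarrow$\ref{T1.sepAlg} runs the Maschke-type averaging argument in the completed setting, where $p$ is not a finite sum of pure tensors, so the classical splitting formula $\sum_i x_i\sigma(y_i w)$ must be reconstructed as a continuous map via the universal property of $\ctens$, and you additionally need that closed subspaces of pseudocompact $k$-vector spaces split continuously (Proposition \ref{ssCharacterization} applied to the algebra $k$ supplies this); all of that is considerably heavier than the paper's one-line projection argument, though it does work. The same completed-tensor caution applies to your \ref{T1.sepIdemp}$\Rightarrow$\ref{T1.derInner}: the map $a\ctens b\mapsto a\,d(b)$ is legitimate only because it is $\lambda(1\ctens d)$ for the bimodule structure map $\lambda$, and spelling this out in element-free form is precisely what the paper's longer computation does.
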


\begin{proof}

[$\ref{T1.sepAlg}\Rightarrow\ref{T1.prodSepMatrices}$] If $A$ is separable then in particular it is semisimple, so by Proposition \ref{ssCharacterization} a product of matrix algebras $A = \prod M_{n_i}(\Delta_i)$, where each $\Delta_i$ is a finite dimensional $k$-division algebra.  If some $\Delta_i$ did not have the form given in \ref{T1.prodSepMatrices} then by \cite[Proposition 10.7]{Pierce}, $M_{n_i}(\Delta_i)$, hence also $A$, would not be separable.

[$\ref{T1.prodSepMatrices}\Rightarrow \ref{T1.invLimSep}$] $\prod_{I} M_{n_i}(\Delta_i) \iso \invlim_{F\subseteq I, |F|<\infty} \prod_{F}M_{n_i}(\Delta_i)$ and each of these finite products is separable, by \cite[Proposition 10.7]{Pierce}.


[$\ref{T1.invLimSep}\Rightarrow \ref{T1.sepAlg}$] Write $A = \invlim A_i$ with each $A_i$ finite dimensional and separable.  For a finite extension field $E$ of $k$ we have
$$A\ctens_k E = \invlim (A_i\ctens_k E)$$
by \cite[Section 2]{Brumer}.  Each $A_i\ctens_k E$ is semisimple so $A\ctens_k E$ is semisimple by Proposition \ref{ssCharacterization}.  Thus $A$ is separable.




[$\ref{T1.invLimSep}\Rightarrow\ref{T1.AisProj}$]
Write $A=\invlim A_i$ with each $A_i$ finite dimensional and separable, with surjective maps. By the finite dimensional version of this result (see, for instance, \cite[Section 10.2]{Pierce}) each $A_i$ is a projective $A_i$-bimodule.  Recall that an $A$-bimodule is the same thing as a left $A\ctens_k A^{\tn{op}}$-module -- the equivalence may be checked as with abstract (bi)modules.
We have $A\ctens_k A^{\tn{op}} = \invlim A_i\ctens_k A_i^{\tn{op}}$ and the maps remain surjective, hence $A$ is a projective $A\ctens_k A^{\tn{op}}$-module by \cite[Corollary 3.3]{Brumer}


[$\ref{T1.AisProj}\Rightarrow\ref{T1.mSplits}$] This is immediate: if $A$ is projective as a bimodule then any continuous bimodule homomorphism onto $A$ splits, and in particular $m$ splits.

[$\ref{T1.mSplits}\Rightarrow\ref{T1.sepIdemp}$] 
Let $\gamma : A\to A\ctens_k A$ split $m$ and write $p = \gamma(1)$.  Then
$$m(p) = m\gamma(1) = 1$$
$$ap = a\gamma(1) = \gamma (a\cdot 1) = \gamma(1\cdot a) = \gamma(1)a = pa$$
since $\gamma$ is a bimodule hom.

[$\ref{T1.sepIdemp}\Rightarrow\ref{T1.invLimSep}$] This is essentially immediate.  Given an open ideal $I$ denote by $m_I:A/I\otimes_k A/I \to A/I$ the multiplication map, and let $p$ be a separability idempotent of $A$.  The commutativity of the square
$$\xymatrix{
A\ctens_k A \ar[r]^{m} \ar[d]_{\pi\ctens \pi} & A \ar[d]^{\pi} \\
A/I\otimes_k A/I\ar[r]_-{m_I} & A/I
}$$
shows that the element $p_I = (\pi\ctens \pi)(p)$ is a separability idempotent of $A/I$:
$$m_I(p_I) = m_I(\pi\ctens \pi)(p) = \pi m(p) = \pi(1) = 1,$$
\begin{align*}
(m_I\ctens 1)((a+I)\ctens p_I) & = (m_I\ctens 1)(\pi\ctens \pi\ctens \pi)(a\ctens p) \\
& = (\pi\ctens \pi )(m\ctens 1)(a\ctens p) \\
& = (\pi\ctens \pi)(1\ctens m)(p\ctens a) \\
& = (1\ctens m_I)(p_I\ctens (a+I)).
\end{align*}


[$\ref{T1.sepIdemp}\Rightarrow\ref{T1.derInner}$] 
By definition, an $A$-bimodule $T$ comes equipped with actions
$$\lambda : A\ctens T \to T\,,\, \mu: T\ctens A \to T
$$
which satisfy the following identities:
$$\lambda(1_A\ctens\lambda) = \lambda(m\ctens 1_T)\,,\quad \mu(\mu\ctens 1_A) = \mu(1_T\ctens m)\,,\quad \mu(\lambda\ctens 1_A) = \lambda(1_A\ctens \mu).$$
Let $p$ be a separability idempotent.  The properties of $p$ translate as
$$m(p) = 1\,,\, (m\ctens 1)(a\ctens p) = (1\ctens m)(p\ctens a), \quad a\in A.$$
Finally $d$ being a derivation translates into
$$dm = \lambda(1\ctens d) + \mu(d\ctens 1).$$
Applying $\mu(d\ctens 1)$ to the separability idempotent equation we get
\begin{align*}
\mu(d\ctens 1)(m\ctens 1)(a\ctens p) & = \mu(d\ctens 1)(1\ctens m)(p\ctens a) \\
\mu(dm\ctens 1)(a\ctens p) & = \mu(d\ctens m)(p\ctens a) \\
\mu((\lambda(1\ctens d) + \mu(d\ctens 1))\ctens 1)(a\ctens p) & = \mu(d\ctens 1)(1\ctens m)(p\ctens a) \\
\mu\lambda(1\ctens d)(a\ctens p) + \mu(\mu(d\ctens 1))\ctens 1)(a\ctens p) & = \mu(d\ctens 1)(1\ctens m)(p\ctens a).
\end{align*}
Analyzing the second term we obtain:
\begin{align*}
\mu(\mu(d\ctens 1))\ctens 1)(a\ctens p) 
& = \mu(\mu\ctens 1)(d\ctens 1\ctens 1)(a\ctens p) \\
& = \mu(1\ctens m)(d(a)\ctens p) \\
& = \mu(d(a)\ctens m(p)) \\
& = d(a).
\end{align*}

Define $u\in T$ to be the element $\mu(d\ctens 1)(p)$.  The first term yields
\begin{align*}
\mu\lambda(1\ctens d)(a\ctens p) 
& = \mu(\lambda\ctens 1)(1\ctens d\ctens 1)(a\ctens p) \\
& = \lambda(1\ctens \mu)(a\ctens(d\ctens 1)(p)) \\
& = \lambda(a\ctens \mu(d\ctens 1)(p)) \\
& = \lambda(a\ctens u)
\end{align*}
while the third term yields
\begin{align*}
\mu\lambda(1\ctens d)(a\ctens p) 
& = \mu(1\ctens m)(d\ctens 1\ctens 1)(p\ctens a) \\
& = \mu(\mu\ctens 1)((d\ctens 1)(p)\ctens a) \\
& = \mu(\mu(d\ctens 1)(p)\ctens a) \\
& = \mu(u\ctens a).
\end{align*}
Thus $d(a) = \mu(u\ctens a) - \lambda(a\ctens u)$, or in compact notation, $d(a) = ua - au$.
Therefore $d$ is inner.

[$\ref{T1.derInner}\Rightarrow\ref{T1.univInner}$] The universal derivation is pseudocompact, so this is immediate.

[$\ref{T1.univInner}\Rightarrow\ref{T1.sepIdemp}$] This is identical to the finite version.  The universal derivation is inner, so write $f(a) = ua - au$ for some $u\in \tn{Ker}(m)$ and define $p = 1\ctens 1 - u$.  Then $p$ is a separability idempotent:
$$m(p) = 1 - m(u) = 1.$$
$$ap-pa = a\ctens 1 - au - 1\ctens a + ua = f(a) - f(a) = 0\quad\forall a\in A.$$

\end{proof}

\begin{corol}\label{corol quotient of sep is sep}
Let $A$ be a separable pseudocompact algebra and $I$ a closed ideal of $A$.  Then $A/I$ is separable.
\end{corol}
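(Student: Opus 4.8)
The plan is to deduce this from the separability idempotent characterization in Theorem \ref{Theorem separable characterization}, i.e.\ the equivalence of \ref{T1.sepAlg} and \ref{T1.sepIdemp}. Since $A$ is separable it carries a separability idempotent $p\in A\ctens_k A$, and a quotient of a pseudocompact algebra by a closed ideal is again pseudocompact, so $A/I$ is a legitimate target. Writing $\pi:A\onto A/I$ for the (continuous, surjective) quotient map, I would simply push $p$ forward to the candidate element $\bar p = (\pi\ctens\pi)(p)\in (A/I)\ctens_k(A/I)$ and verify that it is a separability idempotent for $A/I$; then \ref{T1.sepIdemp}$\Rightarrow$\ref{T1.sepAlg} (concretely, via \ref{T1.sepIdemp}$\Rightarrow$\ref{T1.invLimSep}$\Rightarrow$\ref{T1.sepAlg}) finishes the job.

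The verification is a transport argument essentially identical to the one already carried out for open ideals in the implication \ref{T1.sepIdemp}$\Rightarrow$\ref{T1.invLimSep}. By functoriality of the completed tensor product, $\pi\ctens\pi$ is continuous and is a homomorphism of $A$-bimodules once $(A/I)\ctens_k(A/I)$ is regarded as an $A$-bimodule through $\pi$. Writing $m$ and $m_I$ for the multiplication maps of $A$ and $A/I$, the square
$$\xymatrix{
A\ctens_k A \ar[r]^{m} \ar[d]_{\pi\ctens \pi} & A \ar[d]^{\pi} \\
(A/I)\ctens_k (A/I)\ar[r]_-{m_I} & A/I
}$$
commutes, since both composites send $a\ctens b$ to $\pi(ab)$. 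The first defining property then reads $m_I(\bar p) = m_I(\pi\ctens\pi)(p) = \pi(m(p)) = \pi(1) = 1$. For the centrality condition, any element of $A/I$ has the form $\pi(a)$, and bimodule linearity of $\pi\ctens\pi$ together with $ap = pa$ gives $\pi(a)\bar p = (\pi\ctens\pi)(a\cdot p) = (\pi\ctens\pi)(p\cdot a) = \bar p\,\pi(a)$. Hence $\bar p$ is a separability idempotent for $A/I$.

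The only genuine subtlety, as compared with the open-ideal computation, is that a general closed ideal $I$ may have infinite codimension, so $A/I$ need not be finite dimensional; one therefore cannot replace $(A/I)\ctens_k(A/I)$ by the abstract tensor product and must instead rely on the functoriality of $\ctens_k$ and on $A/I$ being pseudocompact. These are standard facts about the completed tensor product, and once they are in place the argument above is a formality. (One could alternatively argue directly from the structural characterization \ref{T1.prodSepMatrices}, analysing how a closed ideal of a product $\prod M_{n_i}(\Delta_i)$ of simple algebras arises as a sub-product, but the idempotent-transport route is cleaner and avoids case analysis.)
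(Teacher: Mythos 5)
Your proof is correct, but it takes a genuinely different route from the paper's. The paper argues structurally from item \ref{T1.prodSepMatrices} of Theorem \ref{Theorem separable characterization}: writing $A = \prod_{i\in X}M_{n_i}(\Delta_i)$, it decomposes the closed ideal as $I = \prod_{i\in X}\bigl(M_{n_i}(\Delta_i)\cap I\bigr)$ (citing \cite[Proposition 4.3]{JohnRicardoBrauersFirstTheorem}), observes that each intersection is $0$ or the whole factor because matrix algebras are simple, and concludes that $I$ is a sub-product $\prod_{i\in Y}M_{n_i}(\Delta_i)$ with $A/I\iso\prod_{i\in X\setminus Y}M_{n_i}(\Delta_i)$ separable by the same theorem --- exactly the case analysis you chose to avoid. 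Your idempotent-transport argument is instead the natural extension of the paper's own computation in the implication [\ref{T1.sepIdemp}$\Rightarrow$\ref{T1.invLimSep}] from open ideals to arbitrary closed ideals, with $\ctens_k$ replacing $\otimes_k$; it is sound provided one records that $A/I$ is again pseudocompact, that $\pi\ctens\pi$ exists and is continuous by functoriality of the completed tensor product \cite[Section 2]{Brumer}, and that the commutativity of the square and the bimodule identity for $\pi\ctens\pi$, verified on pure tensors, propagate to all of $A\ctens_k A$ by density and continuity --- all standard points, which you correctly flag as the only subtlety. As for what each approach buys: yours is more formal and avoids any input on the structure of closed ideals in products of simple algebras, and it generalizes verbatim to show that any quotient of a pseudocompact algebra admitting a separability idempotent again admits one; the paper's route yields extra structural information as a byproduct, namely that every closed ideal of a separable pseudocompact algebra is a unital sub-product of matrix factors, which identifies the quotient $A/I$ explicitly rather than merely certifying its separability.
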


\begin{proof}
By Theorem \ref{Theorem separable characterization}, $A$ is a direct product of algebras $\prod_{i\in X}M_{n_i}(\Delta_i)$, where $\Delta_i$ is a $k$-division algebra whose center is a separable extension of $k$.  Treating the ideal $I$ as a left $A$-module, it has a unique decomposition of the form $I = \prod_{i\in X}1_{M_{n_i}(\Delta_i)}\cdot I = \prod_{i\in X}(M_{n_i}(\Delta_i)\cap  I)$ -- this can be checked directly, or it follows from \cite[Proposition 4.3]{JohnRicardoBrauersFirstTheorem}.  But $M_{n_i}(\Delta_i)\cap  I$ is an ideal of $M_{n_i}(\Delta_i)$ and is hence either $M_{n_i}(\Delta_i)$ or $0$.  It follows that $I = \prod_{i\in Y}M_{n_i}(\Delta_i)$, for some subset $Y$ of $X$.  The algebra $A/I$ is thus isomorphic to $\prod_{i\in Y\setminus X}M_{n_i}(\Delta_i)$, which is separable by Theorem \ref{Theorem separable characterization}.
\end{proof}

\subsection{The Wedderburn splitting theorem}

If $A$ is a pseudocompact algebra, then by Propositin \ref{ssCharacterization}, $A/J(A)$ is a topologically semisimple pseudocompact algebra.  Even for finite dimensional algebras, the canonical projection $A\to A/J(A)$ need not split as an algebra homomorphism.  But when $A/J(A)$ is \emph{separable} in the sense of Definition \ref{defn.separability}, it always does -- this is a version for pseudocompact algebras of the famous Wedderburn Splitting Theorem.  To our knowledge, a direct proof of this result does not exist in the literature.  The existence of the splitting when $A/J^2(A)$ is finite dimensional is a result of Curtis \cite[Theorem 1]{Curtis54}.  The result in full generality has been proved for coalgebras by Abe \cite[Theorem 2.3.11]{Abe}, and so follows for pseudocompact algebras by duality.  We present a complete proof of this important result for pseudocompact algebras (Theorem \ref{theorem Wedderburn splitting}).  Our proof is essentially dual to Abe's, but we think it is worth presenting for several reasons: the proof becomes accessible to the reader not familiar with the language of coalgebras; Abe's proof itself employs duality, and dualizing twice seems rather unnatural; finally, because Abe passes rather quickly over a subtle point that we feel benefits from more attention (namely, the dual of Lemma \ref{lemma splitting sends ideal to ideal}, which is treated as self-evident).

\begin{lemma}\label{lemma splitting sends ideal to ideal}
Let $A$ be a pseudocompact algebra, let $I$ be a closed ideal of $A$, and suppose that the canonical projection from $A$ to $A/J(A) = A/J$ has a splitting $s : A/J\to A$ as an algebra homomorphism.  Then $s(I+J)\subseteq I$.
\end{lemma}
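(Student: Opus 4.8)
The plan is to reduce the whole statement to showing that a single idempotent lies in $I$, and then to produce that idempotent inside $I$ by a corner-ring invertibility argument. Write $\pi:A\to A/J$ for the projection and $\bar I=\pi(I)=(I+J)/J$, a closed two-sided ideal of $A/J$. Since $A/J$ is topologically semisimple, Proposition \ref{ssCharacterization} presents it as a product $\prod_i M_{n_i}(D_i)$, and arguing exactly as in Corollary \ref{corol quotient of sep is sep} (treating $\bar I$ as a left module) one sees that $\bar I=(A/J)e$ for the central idempotent $e\in\bar I$ that is the identity on the factors meeting $I$ and zero elsewhere. As $s$ is an algebra homomorphism, $s(\bar I)=s((A/J)e)=s(A/J)\,s(e)\subseteq A\,s(e)$, so writing $f=s(e)$ it suffices to prove that $f\in I$: then $s(I+J)=s(\bar I)\subseteq Af\subseteq AI\subseteq I$.

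To produce $f$ inside $I$, I would first choose $a\in I$ with $\pi(a)=e$ (possible since $e\in\bar I=\pi(I)$); then $j:=f-a\in\Ker(\pi)=J$. Note that $f=s(e)$ is idempotent (as $e$ is and $s$ is multiplicative), so the corner ring $fAf$ has identity $f$. Set $u=faf$; since $I$ is a two-sided ideal and $a\in I$ we have $u\in I$, and clearly $u\in fAf$. The key computation is $f-u=f(f-a)f=fjf$, so that $1-f+u=1-fjf$. Because $j\in J$, Proposition \ref{radicalChar} (characterization of $J(A)$ via invertibility of $1-yxz$, with $y=z=f$) guarantees that $w:=1-fjf$ is invertible in $A$; moreover a direct check gives $fw=wf=u$, so $f$ commutes with $w$ and hence with $w^{-1}$. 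From this one extracts $v:=fw^{-1}f\in fAf$ satisfying $uv=vu=f$, i.e.\ $u$ is a unit of the corner ring $fAf$. Finally $f=uv$ with $u\in I$ and $v\in A$ forces $f\in I$, as required.

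The main obstacle is precisely the step the introduction flags as subtle: one cannot simply invoke idempotent lifting, because we need the \emph{specific} idempotent $f=s(e)$ to lie in $I$, not merely some idempotent congruent to it modulo $J$. The device that resolves this is passing to the corner ring $fAf$ and observing that the element $u=faf\in I$ differs from the identity $f$ of $fAf$ by the radical element $fjf$, hence is invertible there; multiplying by its inverse pulls $f$ back into the ideal $I$. The remaining points I expect to be routine: identifying $\bar I$ with $(A/J)e$ via the structure of closed ideals in a topologically semisimple algebra, verifying the commutation $fw=wf=u$, and citing Proposition \ref{radicalChar} for the invertibility of $1$ minus a radical element.
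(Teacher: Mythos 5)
Your proof is correct --- I verified the corner-ring computations ($f-u=fjf$, $fw=wf=u$, $uv=vu=f$, hence $f=uv\in I$) --- but it takes a genuinely different route from the paper's. The paper first reduces to the case where $I$ is \emph{open}, via Lemma \ref{lemmaWilson}, so that $J^n\subseteq I$ for some $n$; it then writes the ideal $(I+J)/J$ of $A/J$ as a product of matrix algebras, takes $x\in I$ with $x+J$ the identity of a factor, and expands $s(x+J)=s((x+J)^n)=(x+j)^n$: every term of the expansion containing $x$ lies in $I$, and the lone remaining term $j^n$ lies in $J^n\subseteq I$. So the paper's engine is nilpotency of the radical modulo open ideals, while yours is quasi-invertibility of radical elements (Proposition \ref{radicalChar}, item \ref{P1.SetInv}): the element $u=faf\in I$ differs from the identity $f$ of the corner ring $fAf$ by $fjf\in J$, hence is a unit of $fAf$, and multiplying by its inverse drags $f$ into $I$. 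Your route buys two real things: it treats all closed ideals uniformly (no reduction to the open case, no nilpotency), and by working with the single generating idempotent $e$ of $\bar I$ rather than with the identity of each matrix factor it avoids having to recombine infinitely many factors --- a point where the paper's per-factor reduction needs either continuity of $s$ or to be run directly on the identity of the whole ideal, exactly as you do.

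The one step you should shore up is the very first: before the structure theorem for closed ideals can be invoked (the decomposition used in Corollary \ref{corol quotient of sep is sep}), you need $\bar I=\pi(I)=(I+J)/J$ to be a \emph{closed} ideal of $A/J$; for non-closed ideals such as $\bigoplus_i M_{n_i}(D_i)\subset \prod_i M_{n_i}(D_i)$ the conclusion $\bar I=(A/J)e$ is simply false. Closedness of $\pi(I)$ is true, but it is nowhere proved in the paper: it follows from linear compactness (a closed subspace of a pseudocompact space is linearly compact by Lemma \ref{Lemma PC is LC}, and a continuous linear image of a linearly compact space is again linearly compact, hence closed in the Hausdorff space $A/J$; see \cite{Zelinsky}). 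This is precisely the point that the paper's reduction to open $I$ is engineered to avoid, since for open $I$ the sum $I+J$ is open, hence closed, for free. So either cite the linear-compactness fact explicitly, or prepend the paper's short reduction to open ideals, after which your argument goes through verbatim.
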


\begin{proof}
First note that it sufficient to check this for $I$ open, because if the claim holds for open ideals and $I$ is closed, then 
$$s(I+J) = s\Big(\bigcap_{L\lhd_O A}(I+L)+J\Big)\subseteq
\bigcap_{L\lhd_O A}s\left((I+L)+J\right)\subseteq \bigcap_{L\lhd_O A} I+L = I.$$
Suppose from now on that $I$ is open.  Then for some $n$, $J^n(A)\subseteq I$.  The closed subspace $(I+J)/J$ is an ideal of $A/J$ and hence, by Proposition \ref{ssCharacterization}, is a direct product of matrix algebras $M_n(\Delta)$.  Since $s$ is an algebra homomorphism, it is sufficient to show that $s$ sends the identity of each such matrix algebra inside $I$.  So fix such a factor $M_n(\Delta)$ and $x\in I$ such that $x+J = 1_{M_n(\Delta)}$.  By the definition of $s$, $s(x+J) = x+j$, for some $j\in J$.  Using that $x+J$ is idempotent, we have
$$s(x+J) = s((x+J)^n) = s(x+J)^n = (x+j)^n.$$
Expanding $(x+j)^n$, every term is an element of $I$ since $I$ is an ideal, except the final term $j^n$, which is an element of $J^n\subseteq I$.  So $s(x+J)\in I$ as required.
\end{proof}

\begin{theorem}\label{theorem Wedderburn splitting}
Let $A$ be a pseudocompact algebra such that $A/J$ is separable.  The canonical projection $A\to A/J$ splits continuously as an algebra homomorphism.
\end{theorem}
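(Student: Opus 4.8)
The plan is to realize the desired splitting as an inverse limit of classical, finite-dimensional Wedderburn splittings. Write $A = \invlim_I A/I$ over the open ideals $I$ of $A$, so that $A/J = \invlim_I A/(J+I)$. For each open $I$ the algebra $A/I$ is finite dimensional with radical $(J+I)/I$ and semisimple quotient $A/(J+I)$; by Corollary \ref{corol quotient of sep is sep} this quotient is separable, so the classical finite-dimensional Wedderburn Theorem furnishes a \emph{nonempty} set $\mathcal{S}_I$ of algebra splittings $s_I : A/(J+I) \to A/I$ of the canonical projection. If I can choose the $s_I$ compatibly, i.e.\ produce an element of $\invlim_I \mathcal S_I$, then passing to the limit yields a continuous algebra map $s : A/J \to A$ splitting the projection, as required.

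First I would check that the $\mathcal S_I$ genuinely form an inverse system. Given open ideals $I \subseteq I'$, compose a splitting $s_I$ with the projection $A/I \to A/I'$; the key point is that the resulting map $A/(J+I) \to A/I'$ kills the kernel $(J+I')/(J+I)$ and hence descends to a splitting $s_{I'} : A/(J+I') \to A/I'$. This descent is precisely Lemma \ref{lemma splitting sends ideal to ideal} applied inside the finite-dimensional algebra $A/I$ (with the ideal taken to be $I'/I$): it guarantees that $s_I$ maps the image of $I'$ in $A/(J+I)$ into $I'/I$, which is exactly what is annihilated by $A/I \to A/I'$. This is the subtle step flagged in the introduction, and it is what makes the transition maps $\rho_{II'} : \mathcal S_I \to \mathcal S_{I'}$ well defined.

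The main obstacle is then purely one of nonemptiness: showing $\invlim_I \mathcal S_I \neq \varnothing$. One cannot invoke linear compactness directly, because the multiplicativity condition cutting out $\mathcal S_I$ inside $\Hom_k(A/(J+I),A/I)$ is quadratic, so $\mathcal S_I$ is not an affine subspace. The resolution is to exploit the conjugation action of the unipotent group $G_I = 1 + \rad(A/I)$: by the finite-dimensional Wedderburn--Malcev theorem this action is transitive on $\mathcal S_I$, exhibiting $\mathcal S_I$ as a homogeneous space under $G_I$. The $G_I$ form a \emph{surjective} inverse system of linearly compact groups, with surjectivity of $G_I \to G_{I'}$ coming from Corollary \ref{radical surjective} (as $\rad(A/I) \to \rad(A/I')$ is onto) and with limit $1 + J(A)$, and the actions are compatible with the $\rho_{II'}$. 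Combining transitivity with surjectivity of the group transitions, one checks that each $\rho_{II'}$ is itself surjective; the sought compatible system is then produced from the standard fact that a surjective inverse system of nonempty linearly compact homogeneous spaces has nonempty limit (the linearly compact analogue of the statement for compact Hausdorff spaces, and essentially the vanishing of $\invlim^{1}$ for this surjective system, ultimately a consequence of Lemma \ref{Lemma PC is LC}). I expect the bookkeeping in this final compactness step---correctly formulating the torsor structure over the directed poset and justifying the vanishing of the obstruction---to be the genuinely delicate part; everything else reduces cleanly to the finite-dimensional theory together with Lemma \ref{lemma splitting sends ideal to ideal}.
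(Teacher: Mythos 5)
Your reduction to finite-dimensional data is sound and in fact parallels the ingredients of the paper's own proof: the descent step making $\rho_{II'}:\mathcal{S}_I\to\mathcal{S}_{I'}$ well defined is a correct application of Lemma \ref{lemma splitting sends ideal to ideal} inside $A/I$, and your surjectivity argument (finite-dimensional Malcev transitivity plus surjectivity of $1+J(A/I)\to 1+J(A/I')$, via Corollary \ref{radical surjective}) is also correct. The gap is exactly where you suspected it, and it is not mere bookkeeping: the ``standard fact'' you invoke does not exist. Surjectivity of the transition maps of an inverse system of nonempty sets does \emph{not} imply nonemptiness of the limit over a general directed poset, and the homogeneous-space structure under a surjective system of groups does not repair this. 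A counterexample with all of your hypotheses: for an uncountable set $\Omega$, index by its finite subsets $F$ and let $X_F$ be the set of injections $F\to\N$, with restriction as transition maps and the constant group system $G_F=\tn{Sym}(\N)$ acting transitively by post-composition; all transitions (of sets and of groups) are surjective and equivariant, yet $\invlim X_F=\varnothing$. Nor can Lemma \ref{Lemma PC is LC} be applied: it concerns closed \emph{affine} subspaces, and as you yourself note $\mathcal{S}_I$ is cut out by quadratic conditions; topologically $\mathcal{S}_I$ is just a discrete set, infinite whenever $k$ is infinite, so it carries no linear compactness whatsoever. The countable/Mittag-Leffler escape route is also unavailable, since the poset of open ideals of a pseudocompact algebra need not have a countable cofinal subset. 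So the nonemptiness of $\invlim_I\mathcal{S}_I$ is a genuine unproved claim, and any proof of it would have to use the specific structure of these sets, not general torsor formalism.

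It is instructive to compare with how the paper sidesteps this obstruction. Rather than trying to choose compatible splittings at all finite levels simultaneously (which is exactly where the $\invlim^1$-type obstruction lives), the paper applies Zorn's Lemma to the poset of pairs $(I,s)$ with $I$ a closed ideal contained in $J$ and $s:A/J\to A/I$ a splitting of $A/I\to A/J$, ordered by compatibility under the projections. Along a chain the splittings are compatible \emph{by definition of the order}, so the limit splitting exists with no choices to be made; minimality of a Zorn-minimal pair $(I_0,s_0)$ is then contradicted by a one-step extension to $A/(I\cap I_0)$, glued via a pullback square from $s_0$ and a finite-dimensional Wedderburn splitting of $A/I$, after adjusting the latter by a finite-dimensional Malcev conjugation so the two agree over $A/(I+I_0)$. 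In other words, the paper uses the same two finite-dimensional inputs you do, but organizes them so that inverse limits are only ever taken along chains of already-compatible data, which is precisely what your approach would need and cannot supply.
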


\begin{proof}
We consider the set $\mathcal{F}$ of pairs $(I,s)$, where $I$ is a closed ideal of $A$ contained in $J$ and $s: A/J \to A/I$ is a splitting of the natural projection $A/I \to A/J$.  Then $\mathcal{F}$ is non-empty, since it contains $(J, \tn{id})$.  Order $\mathcal{F}$ by declaring $(I,s)\leqslant (I',s')$ whenever $I\subseteq I'$ and the diagram
$$\xymatrix{
& A/I\ar[dd] \\
A/J\ar[ur]^{s}\ar[dr]_{s'} & \\
& A/I'}$$
commutes, where the vertical map is the canonical projection.  A chain $\mathcal{C}$ in $\mathcal{F}$ is by definition a totally ordered chain of closed ideals $I$ together with a map of inverse systems
$$\{s : A/J \to A/I \,|\, (I,s) \in \mathcal{C}\},$$
which yields a unique map $\invlim s : A/J\to A/\bigcap_{\mathcal{C}}I$.  Then $(\bigcap_{\mathcal{C}}I, \invlim s)$ is a lower bound for $\mathcal{C}$ and so by Zorn's Lemma, $\mathcal{F}$ contains a minimal element $(I_0, s_0)$.  Our task is to prove that $I_0=0$.  Suppose it were not.  Then there is an open ideal $I$ of $A$ such that $I\cap I_0$ is properly contained in $I_0$.  We will find a splitting $s' : A/J \to A/(I\cap I_0)$ such that $(I\cap I_0, s') < (I_0, s_0)$, contradicting the minimality of $(I_0, s_0)$ and completing the proof. 

We construct two splittings of the canonical projection map $q : A/(I+I_0)\to A/(I+J)$.  
\begin{itemize}
    \item First let $s$ be an algebra splitting of $A/I\to A/(I+J) = (A/I)/J(A/I)$.  This splitting exists by the Wedderburn splitting theorem for finite dimensional algebras \cite[Theorem 72.19]{CR62}, which applies since $A/(I+J)$ is separable by Corollary \ref{corol quotient of sep is sep}.  Composing $s$ with the canonical projection $v : A/I \to A/(I+I_0)$ we obtain a splitting $vs$ of $q$.  
    
    
    \item Second, since $A/J$ is semisimple, by Part \ref{P2.prodFullM} of Proposition \ref{ssCharacterization} the kernel $(I+J)/J$ of the canonical projection $\pi : A/J \to A/(I+J)$ has a unique complement $S$ in $A/J$.  Denote by $\iota : A/(I+J)\to A/J$ the (non-unital) splitting of $\pi$, so that $S = \tn{Im}(\iota)$.  If $p : A/I_0 \to A/(I+I_0)$ is the canonical projection, then the map $ps_0\iota$ is another splitting of $q$.
    
\end{itemize}
The Malcev uniqueness theorem for finite dimensional algebras implies that there is an element $a\in {J(A/(I+I_0))}$ such that
$$(1+a)vs(1+a)^{-1} = ps_0\iota.
$$
Let $a'$ be an element of $J(A/I)$ that projects onto $a$ and replace $s$ with $(1+a')s(1+a')^{-1}$.  We may thus assume from now on that 
$vs = ps_0\iota$.  
Put these maps together in the following diagram:
$$\xymatrix{
& A/J\ar[dl]_{\pi}\ar@/^{20pt}/[ddr]^{s_0} & \\
A/(I+J)\ar[d]_{s} & A/(I\cap I_0)\ar[dl]\ar[dr] & \\
A/I\ar[dr]_{v} && A/I_0\ar[dl]^{p} \\
& A/(I+I_0) &
}$$
The lower square is a pullback diagram, and so if the outer shape commutes, then there is a unique map $s' : A/J\to A/(I\cap I_0)$ making the diagram commute, and this $s'$ will be the splitting we require.  In order to check that the outer shape commutes, write $A/J = \tn{Im}(\iota)\times (I+J)/J$.  Given $x = \iota(y)$ in the first factor, we have
$$vs\pi(x) = vs\pi\iota(y) = ps_0\iota\pi\iota(y) = 
ps_0\iota(y) = ps_0(x).$$
An element $x$ of the second factor is sent to $0$ by $\pi$, so we need to check that $ps_0(I+J)\subseteq I+I_0$.  But $s_0(I+J)\subseteq I+I_0$ by Lemma \ref{lemma splitting sends ideal to ideal}, so we are done.
\end{proof}

\subsection{The Malcev uniqueness theorem}

Now that we have the ``Wedderburn part'' of the fundamental Wedderburn-Malcev Theorem, we turn to the ``Malcev part'', which says that the splitting of Theorem \ref{theorem Wedderburn splitting} is unique up to conjugation by a unit of the form $1+\omega$, where $\omega$ is an element of the radical of $A$.  The Malcev uniqueness theorem for pseudocompact algebras was proved by Eckstein \cite[Theorem 17]{Eckstein}.  We provide a simple proof that closely mimics the argument for finite dimensional algebras (cf.\ for instance \cite[Theorem 72.19]{CR62}):

\begin{theorem}[Malcev Uniqueness Theorem]\label{Malcev Uniqueness}
Suppose that the algebra $A/J$ is separable, and let $S_1$, $S_2$ be two closed subalgebras of $A$ such that 
$
    S_1\oplus J(A)=A=S_2\oplus J(A).
$
There is $\omega\in J$ such that
$$S_1 =(1-\omega)S_2(1-\omega)^{-1}.$$
\end{theorem}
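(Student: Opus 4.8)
The plan is to reduce everything to the fact, established in Theorem \ref{Theorem separable characterization}, that over a separable algebra every pseudocompact derivation is inner. First I would convert the two subalgebras into two algebra splittings of the canonical projection $\pi : A \to A/J$. Since each $S_i$ is closed and $S_i \oplus J = A$, the restriction $\pi|_{S_i} : S_i \to A/J$ is a continuous bijective algebra homomorphism, hence an isomorphism (a continuous bijection of pseudocompact modules is bicontinuous). Setting $s = (\pi|_{S_1})^{-1}$ and $t = (\pi|_{S_2})^{-1}$, I obtain continuous algebra homomorphisms $s,t : A/J \to A$ splitting $\pi$, with $\tn{Im}(s) = S_1$ and $\tn{Im}(t) = S_2$.

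Next I would equip the closed ideal $J = J(A)$ with a pseudocompact $A/J$-bimodule structure built from the \emph{two different} splittings:
$$x \cdot j := s(x)\,j, \qquad j \cdot x := j\,t(x) \qquad (x \in A/J,\ j \in J).$$
These are well defined because $J$ is an ideal of $A$, and the bimodule axioms follow from $s,t$ being unital algebra homomorphisms (the crucial left–right compatibility $(x\cdot j)\cdot y = x\cdot(j\cdot y)$ reduces to the identity $s(x)j\,t(y) = s(x)j\,t(y)$). I then consider the continuous map $d : A/J \to J$ given by $d(x) = s(x) - t(x)$, which lands in $J$ because $\pi d(x) = x - x = 0$; inserting $\pm s(x)t(y)$ in $d(xy)$ shows that $d(xy) = x\cdot d(y) + d(x)\cdot y$, so $d$ is a pseudocompact derivation.

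Because $A/J$ is separable, Theorem \ref{Theorem separable characterization} guarantees that $d$ is inner, so there is $u \in J$ with $d(x) = u\cdot x - x\cdot u = u\,t(x) - s(x)\,u$ for all $x$. Writing $\omega = -u \in J$ and unwinding the definition of $d$, this becomes
$$s(x) - t(x) = s(x)\omega - \omega\,t(x), \qquad\text{i.e.}\qquad s(x)(1-\omega) = (1-\omega)\,t(x).$$
By Proposition \ref{radicalChar} the element $1-\omega$ is invertible (take $y=z=1$ in the invertibility characterization of $J$), so $s(x) = (1-\omega)\,t(x)\,(1-\omega)^{-1}$ for every $x$; passing to images yields $S_1 = (1-\omega)S_2(1-\omega)^{-1}$, as required.

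The step I expect to demand the most care is the bookkeeping around this bimodule structure: since the left and right actions are defined through \emph{distinct} homomorphisms $s$ and $t$, the bimodule axioms and the Leibniz identity must be checked by hand rather than quoted from the untwisted case, and I must align the side and sign conventions of the inner-derivation formula in Theorem \ref{Theorem separable characterization} so that the conjugating unit emerges as $1-\omega$ rather than its inverse. A minor but worthwhile point to record cleanly is the continuity of $s$ and $t$, which rests on a continuous algebra bijection of pseudocompact algebras being automatically bicontinuous.
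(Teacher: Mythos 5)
Your proof is correct and takes essentially the same route as the paper's: the same twisted $A/J$-bimodule structure on $J$ built from the two splittings, the derivation $d = s - t$, innerness via Theorem \ref{Theorem separable characterization}, and conjugation by the unit $1-\omega$. The only differences are presentational — you spell out the passage from the closed subalgebras $S_i$ to continuous splittings of $\pi$ and the sign bookkeeping $\omega = -u$, both of which the paper leaves implicit.
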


\begin{proof}
Let $s_1, s_2:A/J\to A$ be splittings of the canonical projection $A\to A/J$ with images $S_1, S_2$, respectively.  We define an $A/J$-bimodule structure on $J$ by 
$$x\cdot j := s_1(x)j\,,\quad j\cdot x := js_2(x)\quad(x\in A/J, j\in J).$$
The continuous function $d : A/J\to J$ given by $d(x) = s_1(x) - s_2(x)$ is well-defined, because
$$\pi d(x) = \pi s_1(x) - \pi s_2(x) = x-x = 0,$$
so that $d(x)\in J$.  Furthermore, $d$ is a derivation:
\begin{align*}
d(xy) & = s_1(xy) - s_1(x)s_2(y) + s_1(x)s_2(y) - s_2(xy) \\
      & = s_1(x)(s_1(y) - s_2(y)) + (s_1(x) - s_2(x))s_2(y) \\
      & = x\cdot d(y) + d(x)\cdot y.
\end{align*}
By Theorem \ref{Theorem separable characterization}, there is $\omega \in J$ such that
$$d(x) = s_1(x) - s_2(x) = x\cdot \omega - \omega\cdot x = s_1(x)\omega - \omega s_2(x)$$
for all $x\in A/J$, so that
$$s_1(x)(1-\omega) = (1-\omega)s_2(x).$$
The element $1-\omega$ is invertible by Proposition \ref{radicalChar} and so 
$$\tn{Im}(s_1) = (1-\omega)\tn{Im}(s_2)(1-\omega)^{-1}$$
as required.
\end{proof}

\section*{Acknowledgements}
We thank Mark Kleiner for stimulating discussion and a number of important comments on an earlier version of the manuscript. The first author was partially supported by FAPESP grant
2018/23690-6. The second author was partially supported by CNPq Universal Grant
402934/2021-0.


\end{document}